\newtheorem{lemma}{Lemma}
\newtheorem{corollary}{Corollary}
\newtheorem{theorem}{Theorem}
\newtheorem{remark}{Remark}
\begin{document}

\title [Tur\'an type inequalities for  Mittag-Leffler functions]{Tur\'an type inequalities for classical and generalized Mittag-Leffler functions\\}%

\author[ K. Mehrez, S. M. Sitnik]{KHALED MEHREZ\;and\; Sergei M. Sitnik }

\address{Khaled Mehrez.\newline D\'epartement de Math\'ematiques ISSAT Kasserine, Universit\'e de Kairouan, Tunisia.}
 \email{k.mehrez@yahoo.fr}

\address{Sergei M. Sitnik.\newline Voronezh Institute of the Ministry of Internal Affairs of Russia, Patriotov  pr.,  53, Voronezh, 394065, Russia;\newline
and\newline
 Peoples' Friendship University of Russia, M. - Maklaya str., 6, Moscow, 117198, Russia.}
\email{pochtaname@gmail.com}

\begin{abstract}
 In this paper  some Tur\'an type inequalities for classical and generalized Mittag--Leffler functions are considered. The method is based on proving  monotonicity for special ratio of sections for series of such  functions.
Some applications are considered to   Lazarevi\'c--type and Wilker--type inequalities for classical and generalized Mittag--Leffler functions.
\end{abstract}
\maketitle
%%%%%%%%%%%%%%%%%%%%%%%%%%%%%%%%%%%%%%%%%%%%%%%%%%%%%%%
\noindent\textbf{keywords:} Mittag--Leffler functions; Tur\'an type inequalities; Lazarevi\'c--type inequalities; Wilker--type inequalities.  \\
\\
\noindent\textbf{MSC (2010):} 33E12, 26D07.
\section{\textbf{Introduction}}
We use a definition of Mittag--Leffler function by its series
\begin{equation}\label{ML}
E_{\alpha, \beta}(z)=\sum_{n=0}^{\infty}\frac{z^n}{\Gamma(\alpha n+\beta)},\;\; z\in\mathbb{C},\;\alpha, \beta\in\mathbb{C},\;	 Re(\alpha)>0,\; Re(\beta)>0.
\end{equation}
This function was first introduced by G.~Mittag--Leffler in 1903 for $\alpha=1$ and by A.~Wiman in 1905 for the general case (\ref{ML}).
For the mathematical theory and properties of Mittag--Leffler functions cf. \cite{Dzh}, \cite{KRM}, \cite{NIST}.

First applications of the function  (\ref{ML}) by  Mittag--Leffler and Viman were in complex function theory (non--trivial examples of entire functions with fractional growth characteristics such as order and generalized summation methods). But its really important applications were found in 20--th century for  fractional integral and differential equations. The most known result in this field is an explicit formula for the resolvent of Riemann--Liouville fractional integral proved by E.~Hille and J.~Tamarkin in 1930. On this and similar formulas many results are based still for solving fractional integral and differential equations. For numerous applications of the Mittag--Leffler function  to fractional calculus cf. \cite{SKM}, \cite{KRM}, \cite{Die}, \cite{KST}, \cite{Plb}.
Due to many useful applications it  was crowned by R.~Gorenflo and F.~Mainardi in \cite{GM} as a \textsl{"Queen function of Fractional Calculus"}!
Besides fractional calculus  the Mittag--Leffler function also plays an important role in various branches of applied mathematics and engineering sciences, such as chemistry, biology,  statistics, thermodynamics, mechanics, quantum physics, informatics, signal processing  and others.

There are further related generalizations of the  Mittag--Leffler function, namely Wright and Fox functions. Wright functions are defined in the same way as (\ref{ML}) but with more gamma--functions both in numerator and denominator, sometimes these functions are called "multi--indexed Mittag--Leffler functions", cf. \cite{Kir1},\cite{Kir2}, \cite{SM}. Fox function is defined by the Mellin transform, cf. \cite{KS}, \cite{MS}, \cite{MSH}, \cite{SM}, \cite{SGG}, it has also important applications e.g. in fractional diffusion theory, cf.  \cite{Koch1}, \cite{Koch2}, \cite{Koch3}.

By means of the series representation Prabhakar in  \cite{PR} studied the function
\begin{equation}\label{ML2}
E_{\alpha, \beta}^\gamma(z)=\sum_{n=0}^{\infty}\frac{(\gamma)_n z^n}{n!\Gamma(\alpha n+\beta)},\;\; z\in\mathbb{C},\;\alpha, \beta, \gamma\in\mathbb{C},\;	 Re(\alpha)>0,\; Re(\beta)>0,\; Re(\gamma)>0.
\end{equation}
where $(\gamma)_n$ is the Pochhammer symbol given by
$$(\gamma)_n=\frac{\Gamma(\gamma+n)}{\Gamma(\gamma)},\;(\gamma)_0=1.$$
In 2007 Shukla and Prajapati \cite{SP} studied another generalization with four parameters $E_{\alpha,\beta}^{\gamma,q}(z)$ which is defined for  $z, \alpha, \beta, \gamma\in\mathbb{C},\;	 Re(\alpha)>0,\; Re(\beta)>0,\; Re(\gamma)>0$ and $q\in(0,1)\cup\mathbb{N}$ as
\begin{equation}\label{ML3}
E_{\alpha, \beta}^{\gamma,q}(z)=\sum_{n=0}^{\infty}\frac{(\gamma)_{qn} z^n}{n!\Gamma(\alpha n+\beta)}.
\end{equation}
Note that
$$E_{1, 1}^{1,1}(z)=e^z,\;E_{\alpha,1 }^{1,1}(z)=E_\alpha(z),\;E_{\alpha, \beta}^{1,1}(z)=E_{\alpha, \beta}(z),\;\textrm{and}\;\;E_{\alpha, \beta}^{\gamma,1}(z)=E_{\alpha, \beta}^{\gamma}(z).$$
These interesting generalizations are special cases of generalized Mittag--Leffler function, cf. \cite{Kir1}--\cite{Kir2}.

An important result which initiated  a new field of researches on inequalities for special functions was proved   by  Paul Tur\'an, it is:

\begin{equation}
 [P_n(x)]^2-P_{n+1}(x)P_{n-1}(x) \ge 0,
\end{equation}
where $-1<x<1$, $n\in\mathbb{N}$ and $P_n(\cdot)$ stands  for the classical Legendre polynomial. This inequality was
published by Tur\'an in 1950 in \cite{Tur} but proved earlier in 1946  in a letter to Szeg\"o. Since the publication of the above  Tur\'an inequality  in 1948 by Szeg\"o \cite{Sze} many authors derived  results of such type for classical orthogonal polynomials and different special functions.
The Tur\'an type inequalities
now have an extensive literature and some of the results have been applied successfully to different problems
in information theory, economic theory, biophysics, probability and statistics. For more details cf.  \cite{Bar1}, \cite{Bar2}, \cite{BG},  \cite{MRS}, \cite{MSK}, \cite{SB}. The results on Tur\'an type inequalities are closely connected with log--convex and log--concave functions, cf.  \cite{Kar1}, \cite{Kar2}, \cite{KaSi1}, \cite{KaSi2}.

Now Tur\'an--type inequalities are proved for different classes of special functions: Kummer hypergeometric functions (cf. \cite{BG}, \cite{MS1}, \cite{MS2}, \cite{MS3}, \cite{Sita1}, \cite{Sita2}, \cite{Sita3}), Gauss hypergeometric functions (cf. \cite{KaSi1}, \cite{KaSi2}, \cite{MS1}, \cite{MS2}, \cite{MS3}), different types of Bessel functions (cf. \cite{Bar1}, \cite{Bar2}),
Dunkl kernel and $q$--Dunkl kernel (cf. \cite{MSK}), $q-$Kummer hypergeometric functions (cf. \cite{MS4}, \cite{MS5}, \cite{MS6}) and some others.

This paper is a continuation of some line of authors results.
In 1990 one of the authors studied inequalities for sections of series for exponential function in \cite{Sita1}. Among other results in \cite{Sita1} a conjecture was proposed on monotonicity of ratios for Kummer   hypergeometric function, cf. also \cite{Sita2}--\cite{Sita3}. This conjecture was proved recently by the authors in \cite{MS1}--\cite{MS2}, cf. also \cite{MS3}--\cite{MS4}. After that $q$--versions of these results were proved in \cite{MS5}--\cite{MS7}.

The paper is organized as follows. In section 2 we collect some lemmas. In section 3 we give some Tur\'an type inequalities for Mittag--Leffler functions. Moreover, we prove  monotonicity of ratios for sections of series of Mittag--Leffler functions, the result is also closely connected with Tur\'an-type inequalities. In section 4 we deduce new inequalities
of Lazarevi\'c--type and Wilker--type  for  Mittag--Leffler functions. In sections 5 and 6 similiar results are proved for generalized Mittag--Leffler functions \eqref{ML2}--\eqref{ML3}, which demonstrate that the technics of this paper is applicable for these functions too. At the end of the paper two unsolved problems are included.

\section{\textbf{Useful lemmas}}

We need the following two useful lemmas proved in \cite{BK}, \cite{PV}.

\begin{lemma}\label{l1}
Let $(a_{n})$ and $(b_{n})$ $(n=0,1,2...)$ be real numbers, such that $b_{n}>0,\;n=0,1,2,...$ and $\left(\frac{a_{n}}{b_{n}}\right)_{n\geq 0}$ is increasing (decreasing), then $\left(\frac{a_{0+}...+a_{n}}{b_{0}+...+b_{n}}\right)_{n}$ is also increasing (decreasing).
\end{lemma}

\begin{lemma}\label{l2}
Let $(a_{n})$ and $(b_{n})$ $(n=0,1,2...)$ be real numbers and
let the power series $A(x)=\sum_{n=0}^{\infty}a_{n}x^{n}$ and $B(x)=\sum_{n=0}^{\infty}b_{n}x^{n}$
be convergent for $|x|<r$. If $b_{n}>0,\, n=0,1,2,...$ and  the
sequence $\left(\frac{a_{n}}{b_{n}}\right)_{n\geq0}$is (strictly)
increasing (decreasing) , then the function $\frac{A(x)}{B(x)}$ is also
(strictly) increasing on $[0,r)$.
\end{lemma}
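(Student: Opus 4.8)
The plan is to show directly that the derivative of $A/B$ is nonnegative on $[0,r)$. First I would record that, since $b_n>0$ for all $n$ and $x\ge 0$, every term $b_n x^n$ is nonnegative while $b_0>0$, so $B(x)>0$ on $[0,r)$; hence $A/B$ is well defined and differentiable there, and the sign of its derivative coincides with the sign of the Wronskian-type expression
\[
W(x):=A'(x)B(x)-A(x)B'(x),
\]
because $(A/B)'=W/B^2$. Everything thus reduces to proving $W(x)\ge 0$ for $x\in[0,r)$.

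The key step is to expand $W$ as a power series and read off the sign of its coefficients. Writing $A'(x)=\sum_{p\ge0}(p+1)a_{p+1}x^p$ and similarly for $B'$, then multiplying out by the Cauchy product, I would collect the coefficient of $x^{n}$ in $W$ and, after reindexing and averaging the raw expression with its image under the substitution $i\mapsto m-i$, bring it to the symmetric form
\[
w_n=\frac12\sum_{i=0}^{m}(2i-m)\,b_i b_{m-i}\!\left(\frac{a_i}{b_i}-\frac{a_{m-i}}{b_{m-i}}\right),\qquad m=n+1.
\]
Setting $t_n=a_n/b_n$, each summand is $\tfrac12(2i-m)\,b_ib_{m-i}\,(t_i-t_{m-i})$, and here the monotonicity hypothesis does all the work: when $2i-m>0$ one has $i>m-i$, so $t_i\ge t_{m-i}$, while when $2i-m<0$ the inequality reverses, so in either case $(2i-m)(t_i-t_{m-i})\ge 0$; combined with $b_ib_{m-i}>0$ this forces $w_n\ge 0$ for every $n$.

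With all coefficients nonnegative and $x\ge 0$, the series $W(x)=\sum_{n\ge0}w_n x^n$ is nonnegative on $[0,r)$, whence $(A/B)'=W/B^2\ge 0$ and $A/B$ is increasing. For the strict statement I would observe that if $(t_n)$ is strictly increasing then already $w_0>0$, so $W(x)\ge w_0>0$ throughout $[0,r)$, giving strict monotonicity; the decreasing case follows verbatim by reversing the inequalities (or by applying the result to $-A$). The main obstacle is purely the bookkeeping in the Cauchy product together with finding the right symmetrization that exposes the product $(2i-m)(t_i-t_{m-i})$ — once that rearrangement is in hand, the monotonicity hypothesis makes positivity immediate. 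I also note that this is precisely the continuous counterpart of Lemma~\ref{l1}, so one could alternatively try to deduce it from the discrete statement by a partial-sum/limiting argument, though the direct Wronskian computation seems cleanest.
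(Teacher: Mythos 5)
Your proof is correct, and since the paper does not prove Lemma~\ref{l2} itself (it is quoted as a known result from \cite{BK} and \cite{PV}), there is nothing in the paper to diverge from: your Wronskian computation, with the symmetrization $i\mapsto m-i$ that exposes the nonnegative factor $(2i-m)\bigl(t_i-t_{m-i}\bigr)b_ib_{m-i}$, is essentially the standard argument given in those references. The auxiliary points you flag (term-by-term differentiation and the Cauchy product inside the radius of convergence, $B>0$ on $[0,r)$ from $b_n>0$, and $w_0=b_0b_1(t_1-t_0)>0$ in the strict case) are all handled correctly.
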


\section{\textbf{Tur\'an type inequalities for   Mittag--Leffler functions}}

Our first  result is the next theorem.

\begin{theorem} Let $\alpha, \beta>0.$  Then the following assertions are true.\\

\noindent	\textbf{a.} The function $\beta\mapsto \mathbb{E}_{\alpha,\beta}(z)= \Gamma(\beta) E_{\alpha,\beta}(z)$ is log--convex on $(0,\infty).$\\

\noindent	\textbf{b.}  The following Tur\'an type inequality
	\begin{equation}\label{6}
	\mathbb{E}_{\alpha,\beta}(z)\mathbb{E}_{\alpha,\beta+2}(z)-\mathbb{E}^2_{\alpha,\beta+1}(z)\geq0
	\end{equation}
	holds for all $z\in(0,\infty).$

In particular, the following inequality
	\begin{equation}\label{66}
	(e^z-1)^2\leq2e^z(e^z-1-z)
	\end{equation}
	is valid for all $z>0.$\\

\noindent	\textbf{c.} For $n\in\mathbb{N}$  define the function $E^{n}_{\alpha, \beta}(z)$ by
$$E^{n}_{\alpha, \beta}(z)=E_{\alpha, \beta}(z)-\sum_{k=0}^{n}\frac{z^k}{\Gamma(\alpha k+\beta)}=\sum_{k=n+1}^{\infty}\frac{z^k}{\Gamma(\alpha k+\beta)}.$$
Then the following Tur\'an type inequality
\begin{equation}\label{8}
E^n_{\alpha,\beta}(z)E^{n+2}_{\alpha,\beta}(z)\leq [E^{n+1}_{\alpha,\beta}(z)]^2.
\end{equation}
is valid for all $n\in\mathbb{N}$, and $\alpha, \beta>0$ and $z>0.$
\end{theorem}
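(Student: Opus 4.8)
The plan is to read \eqref{8} as a log--concavity statement for the sequence $n\mapsto E^n_{\alpha,\beta}(z)$ and to reduce it to the monotonicity of a single ratio, to which the technique of Lemma \ref{l1} applies. Write $a_k=\dfrac{z^k}{\Gamma(\alpha k+\beta)}$, so that $a_k>0$ for $z>0$ and $E^n_{\alpha,\beta}(z)=\sum_{k=n+1}^{\infty}a_k$. Since all the quantities $E^n_{\alpha,\beta}(z)$ are positive, the inequality \eqref{8} is equivalent to $\dfrac{E^{n+2}_{\alpha,\beta}(z)}{E^{n+1}_{\alpha,\beta}(z)}\le\dfrac{E^{n+1}_{\alpha,\beta}(z)}{E^{n}_{\alpha,\beta}(z)}$, that is, to the assertion that the ratio $n\mapsto E^{n+1}_{\alpha,\beta}(z)/E^{n}_{\alpha,\beta}(z)$ is non-increasing. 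After shifting the summation index in the numerator, this ratio may be written as
\[
\frac{E^{n+1}_{\alpha,\beta}(z)}{E^{n}_{\alpha,\beta}(z)}=\frac{\sum_{k=n+1}^{\infty}a_{k+1}}{\sum_{k=n+1}^{\infty}a_{k}},
\]
a quotient of two tails of series built from $u_k=a_{k+1}$ and $v_k=a_k$.

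The key step is to show that the term ratio $u_k/v_k=a_{k+1}/a_k$ is decreasing in $k$, i.e. that $(a_k)$ is log--concave. A direct computation gives
\[
\frac{a_{k+1}}{a_k}=z\,\frac{\Gamma(\alpha k+\beta)}{\Gamma(\alpha k+\alpha+\beta)},
\]
so it suffices to prove that $x\mapsto \Gamma(x)/\Gamma(x+\alpha)$ is decreasing on $(0,\infty)$ for $\alpha>0$; since $z>0$ and $x=\alpha k+\beta$ increases with $k$, the claim follows. This monotonicity is exactly the log--convexity of the Gamma function: with $g(x)=\log\Gamma(x)-\log\Gamma(x+\alpha)$ one has $g'(x)=\psi(x)-\psi(x+\alpha)<0$ because the digamma function $\psi$ is increasing on $(0,\infty)$, hence $\Gamma(x)/\Gamma(x+\alpha)=e^{g(x)}$ is decreasing. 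I expect this Gamma--ratio monotonicity to be the essential analytic input of the proof.

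Finally I would feed this into a tail analogue of Lemma \ref{l1}: if $v_k>0$ and $u_k/v_k$ is decreasing, then $\bigl(\sum_{k\ge m}u_k\bigr)/\bigl(\sum_{k\ge m}v_k\bigr)$ is decreasing in $m$. This is proved by the same telescoping argument as Lemma \ref{l1}: writing $U_m=\sum_{k\ge m}u_k$ and $V_m=\sum_{k\ge m}v_k$, the sign of $U_{m+1}/V_{m+1}-U_m/V_m$ is the sign of $U_{m+1}v_m-u_mV_{m+1}$, and since $U_{m+1}/V_{m+1}$ is a $v_k$--weighted average of the numbers $u_k/v_k$ with $k\ge m+1$, all of which are at most $u_m/v_m$, this sign is non-positive. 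Applying this with $m=n+1$ shows that $E^{n+1}_{\alpha,\beta}(z)/E^{n}_{\alpha,\beta}(z)$ is non-increasing in $n$, which is \eqref{8}. The only points requiring care are the convergence of the tails (guaranteed since $E_{\alpha,\beta}$ is entire) and the positivity of the $a_k$ (guaranteed by $z>0$); an equivalent route avoiding the tail lemma is to observe that $E^{n+1}_{\alpha,\beta}(z)/a_{n+1}=\sum_{j\ge1}a_{n+1+j}/a_{n+1}$ is a sum of functions each decreasing in $n$, because $a_{n+1+j}/a_{n+1}$ is a product of consecutive decreasing ratios $a_{k+1}/a_k$.
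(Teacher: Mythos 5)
Your proposal addresses only assertion \textbf{c} of the theorem; assertions \textbf{a} and \textbf{b} are left completely unproved. You give no argument for the log--convexity of $\beta\mapsto\Gamma(\beta)E_{\alpha,\beta}(z)$, nor for the Tur\'an inequality \eqref{6} and its specialization \eqref{66}. In the paper, \textbf{a} is obtained by showing that each term $\Gamma(\beta)/\Gamma(\beta+\alpha k)$ is log--convex in $\beta$ (equivalently $\psi'(\beta)-\psi'(\beta+\alpha k)\geq0$, which follows from concavity of the digamma function) and using that a sum of log--convex functions is log--convex; \textbf{b} then follows from \textbf{a} by writing the log--convexity inequality with $t=1/2$, $\beta_1=\beta$, $\beta_2=\beta+2$, and \eqref{66} is the case $\alpha=\beta=1$ combined with the recurrence $E_{\alpha,\beta}(z)=zE_{\alpha,\alpha+\beta}(z)+1/\Gamma(\beta)$. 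None of this is recoverable from what you wrote, so as a proof of the stated theorem the proposal has a genuine gap of coverage.

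For part \textbf{c} itself your argument is correct and takes a genuinely different route from the paper's. With your notation $a_k=z^k/\Gamma(\alpha k+\beta)$, the paper substitutes $E^{n}_{\alpha,\beta}=E^{n+1}_{\alpha,\beta}+a_{n+1}$ and $E^{n+2}_{\alpha,\beta}=E^{n+1}_{\alpha,\beta}-a_{n+2}$ into $E^{n}_{\alpha,\beta}E^{n+2}_{\alpha,\beta}-[E^{n+1}_{\alpha,\beta}]^2$, rewrites the result as a single power series, and shows every coefficient $A_k(\alpha,\beta)=\Gamma(\beta+(n+2)\alpha)\Gamma(\beta+(k-1)\alpha)-\Gamma(\beta+(n+1)\alpha)\Gamma(\beta+k\alpha)$ is nonpositive via the Gamma--ratio inequality \eqref{88}. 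You instead recast \eqref{8} as the statement that $n\mapsto E^{n+1}_{\alpha,\beta}(z)/E^{n}_{\alpha,\beta}(z)$ is non-increasing and deduce it from the log--concavity of $k\mapsto a_k$ through a tail analogue of Lemma \ref{l1}; the essential analytic input (monotonicity of $x\mapsto\Gamma(x)/\Gamma(x+\alpha)$, i.e.\ log--convexity of $\Gamma$, which is exactly \eqref{88}) is the same in both proofs, so the two arguments are of comparable depth, yours being arguably more transparent since it avoids the coefficient bookkeeping. Two small points, both of which you handle adequately: Lemma \ref{l1} as stated concerns partial sums rather than tails, so the weighted--average argument you sketch for the tail version is genuinely needed and is correct; and your fallback via the observation that $E^{n+1}_{\alpha,\beta}(z)/a_{n+1}$ is decreasing in $n$ does close the loop, since $E^{n+1}_{\alpha,\beta}/E^{n}_{\alpha,\beta}=t/(1+t)$ with $t=E^{n+1}_{\alpha,\beta}/a_{n+1}$ and $t\mapsto t/(1+t)$ is increasing.
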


\begin{proof}
\textbf{a.} For log--convexity of $\beta\mapsto\mathbb{E}_{\alpha,\beta}(z)$ we observe that it is enough to show the log--convexity of each individual term and to use the fact that the sum of log--convex functions
is log--convex too. Thus, we just need to show that for each $k\geq0$ we have
\[\frac{\partial^{2}}{\partial\beta^{2}}\log\left[\frac{\Gamma(\beta)}{\Gamma(\beta+\alpha k)}\right]=\psi^{'}(\beta)-\psi^{'}(\beta+\alpha k) \ge 0,\]
where $\psi(x)=\Gamma^{\prime}(x)/\Gamma(x)$ is the so--called digamma function. But $\psi$ is known to be concave, and consequently the function  $\beta\mapsto \frac{\Gamma(\beta)}{\Gamma(\beta+k\alpha)}$ is log--convex on $(0,\infty).$\\

\textbf{b.} Since the function $\beta\mapsto \mathbb{E}_{\alpha,\beta}(z)$ is log--convex, then for all $\beta_1, \beta_2>0,\;z>0$ and $t\in[0,1]$ we have
$$ \mathbb{E}_{\alpha, t\beta_1+(1-t)\beta_2}(z)\leq  [\mathbb{E}_{\alpha,\beta_1}(z)]^t[\mathbb{E}_{\alpha,\beta_2}(z)]^{1-t}.$$
Now choosing $t=1/2,\;\beta_1=\beta,\;\beta_2=\beta+2$ we conclude that (\ref{6}) holds. To prove the inequality (\ref{66}) choose $\alpha=\beta=1$ in(\ref{6}) and use a recurrence relation from (\cite{MSH}, Theorem 5.1)
 $$E_{\alpha,\beta}(z)=z E_{\alpha,\alpha+\beta}(z)+\frac{1}{\Gamma(\beta)}.$$

\textbf{c.} Let $n\in\mathbb{N},$ from the definition of the function $E_n(\alpha,\beta,z),$ we have
$$E^{n}_{\alpha,\beta}(z)=E^{n+1}_{\alpha,\beta}(z)+\frac{z^{n+1}}{\Gamma(\beta+(n+1)\alpha)}$$
and
$$E^{n+2}_{\alpha,\beta}(z)=E^{n+1}_{\alpha,\beta}(z)-\frac{z^{n+2}}{\Gamma(\beta+(n+2)\alpha)}.$$
It implies that
$$E^{n}_{\alpha,\beta}(z)E^{n+2}_{\alpha,\beta}(z)-[E^{n+1}_{\alpha,\beta}(z)]^2=$$
$$=E^{n+1}_{\alpha,\beta}(z)\left(\frac{z^{n+1}}{\Gamma(\beta+(n+1)\alpha)}-\frac{z^{n+2}}{\Gamma(\beta+(n+2)\alpha)}\right)-\frac{z^{2n+3}}{\Gamma(\beta+(n+1)\alpha)\Gamma(\beta+(n+2)\alpha)}$$
$$=\sum_{k=n+3}^{\infty}\left[\frac{1}{\Gamma(\beta+(n+1)\alpha)\Gamma(\beta+k\alpha)}-\frac{1}{\Gamma(\beta+(n+2)\alpha)\Gamma(\beta+(k-1)\alpha)}\right]z^{k+n+1}$$
$$=\sum_{k=n+3}^{\infty}\frac{A_k(\alpha,\beta)}{\Gamma(\beta+(n+1)\alpha)\Gamma(\beta+k\alpha)\Gamma(\beta+(n+2)\alpha)\Gamma(\beta+(k-1)\alpha)}z^{k+n+1},$$
where
$$A_k(\alpha,\beta)=\Gamma(\beta+(n+2)\alpha)\Gamma(\beta+(k-1)\alpha)-\Gamma(\beta+(n+1)\alpha)\Gamma(\beta+k\alpha).$$
Due to log--convexity of $\Gamma(x)$, the ratio $x\mapsto\frac{\Gamma(x+a)}{\Gamma(x)}$ is increasing on $(0,\infty)$ when $a>0.$ It implies the following inequality
\begin{equation}\label{88}
\frac{\Gamma(x+a)}{\Gamma(x)}\leq\frac{\Gamma(x+a+b)}{\Gamma(x+b)}.
\end{equation}
holds for all $a,b>0.$ For $n\geq0$ and $k\geq n+3$, let $x=\beta+(n+1)\alpha,\;a=\alpha\;,\beta=\alpha(k-(n+2))$ in (\ref{88}) we conclude that $A_k(\alpha,\beta)\leq0$, and so the Tur\'an type inequality (\ref{8}) is proved.
\end{proof}

\begin{corollary}The following Tur\'an type inequality
\begin{equation}\label{09}
E_{\alpha,\beta+(n+1)\alpha}(z)E_{\alpha,\beta+(n+3)\alpha}(z)\leq [E_{\alpha,\beta+(n+2)\alpha}(z)]^2,
\end{equation}
holds for all  $n\in\mathbb{N}$, and $\alpha, \beta\geq0$ and $z>0.$
\end{corollary}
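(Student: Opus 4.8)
The plan is to recognize that the truncated tail functions $E^{n}_{\alpha,\beta}(z)$ introduced in part \textbf{c} of the theorem are, up to an explicit monomial factor in $z$, nothing but shifted Mittag--Leffler functions. Once this identification is made, the Tur\'an inequality \eqref{8} transforms directly into \eqref{09} after cancelling a common power of $z$, so the corollary is an immediate consequence of the theorem rather than a fresh computation.

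First I would re-index the defining series of the tail. Factoring $z^{n+1}$ out of
\[
E^{n}_{\alpha,\beta}(z)=\sum_{k=n+1}^{\infty}\frac{z^k}{\Gamma(\alpha k+\beta)}
\]
and substituting $m=k-(n+1)$ yields
\[
E^{n}_{\alpha,\beta}(z)=z^{n+1}\sum_{m=0}^{\infty}\frac{z^m}{\Gamma(\alpha m+\beta+(n+1)\alpha)}=z^{n+1}\,E_{\alpha,\beta+(n+1)\alpha}(z).
\]
Applying the same identity with $n$ replaced by $n+1$ and $n+2$ gives $E^{n+1}_{\alpha,\beta}(z)=z^{n+2}E_{\alpha,\beta+(n+2)\alpha}(z)$ and $E^{n+2}_{\alpha,\beta}(z)=z^{n+3}E_{\alpha,\beta+(n+3)\alpha}(z)$.

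Next I would insert these three identities into inequality \eqref{8}. The left-hand side becomes $z^{2n+4}E_{\alpha,\beta+(n+1)\alpha}(z)\,E_{\alpha,\beta+(n+3)\alpha}(z)$ and the right-hand side becomes $z^{2n+4}\,[E_{\alpha,\beta+(n+2)\alpha}(z)]^{2}$, the exponents matching because $(n+1)+(n+3)=2(n+2)$. Since $z>0$, the common positive factor $z^{2n+4}$ may be divided out, leaving precisely \eqref{09}.

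There is essentially no obstacle here: the whole content is packaged in the theorem. The only point demanding care is the bookkeeping of the index shift, namely verifying that the powers of $z$ on the two sides agree (both equal $z^{2n+4}$) so that the cancellation is clean, and noting that the hypothesis $z>0$ is exactly what licenses dividing by $z^{2n+4}$. The admissible parameter range in \eqref{09} is inherited from \eqref{8}, with the boundary values handled by continuity where the series continue to converge.
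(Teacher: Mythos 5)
Your proposal is correct and is essentially the paper's own proof: the identity $E^{n}_{\alpha,\beta}(z)=z^{n+1}E_{\alpha,\beta+(n+1)\alpha}(z)$ that you obtain by re-indexing the tail series is exactly the formula (\ref{9}) the paper quotes from the literature, and both arguments then substitute it into (\ref{8}) and cancel the common factor $z^{2n+4}$. The bookkeeping of the exponents and the remark that $z>0$ justifies the cancellation are both accurate.
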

\begin{proof} In \cite{MSH}  the following formula for Mittag--Leffler functions was  proved
\begin{equation}\label{9}
z^n E_{\alpha,\beta+n\alpha}(z)=E_{\alpha,\beta}(z)-\sum_{k=0}^{n-1}\frac{z^k}{\Gamma(\beta+k\alpha)},
\end{equation}
\end{proof}
it holds for all $\alpha>0,\;\beta>0$ and $n\in\mathbb{N}.$ Using (\ref{8}) and (\ref{9}) we conclude that (\ref{09}) holds.

\begin{theorem} Let $\alpha,\beta>0\;$ and $n\in\mathbb{N}.$ Then the function $h_{n}(\alpha,\beta,z)$ defined by
$$x\mapsto h_{n}(\alpha,\beta,z)=\frac{E^n_{\alpha,\beta}(z)E^{n+2}_{\alpha,\beta}(z)}{[E^{n+1}_{\alpha,\beta}(z)]^2}$$
is increasing on $(0,\infty).$ So the following Tur\'an type inequality
\begin{equation}\label{999}
\frac{\Gamma^2(\beta+(n+2)\alpha)}{\Gamma(\beta+(n+1)\alpha)\Gamma(\beta+(n+3)\alpha)}[E^{n+1}_{\alpha,\beta}(z)]^2\leq E^n_{\alpha,\beta}(z)E^{n+2}_{\alpha,\beta}(z),
\end{equation}
holds for all $\alpha,\beta>0,\;n\in\mathbb{N}$ and $z>0.$ The constant in LHS of inequality (\ref{999}) is sharp.
\end{theorem}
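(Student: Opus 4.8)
The plan is to establish monotonicity of $h_n$ via the ratio-of-sections machinery from Lemma~\ref{l2}, and then to extract the sharp Tur\'an inequality \eqref{999} by evaluating the limit of $h_n$ at the left endpoint $z=0$. The key observation is that $h_n$ is a ratio of two power series in $z$ once we clear the square in the denominator: writing $E^n_{\alpha,\beta}(z)E^{n+2}_{\alpha,\beta}(z)$ and $[E^{n+1}_{\alpha,\beta}(z)]^2$ as Cauchy products of the tail series defining $E^m_{\alpha,\beta}$, both become power series with nonnegative coefficients (at least for $z>0$), and the quotient is exactly the object Lemma~\ref{l2} is designed to handle. So the structure of the argument mirrors part \textbf{c} of the previous theorem, but now we track the full coefficient sequences rather than just signs.

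First I would write $E^n_{\alpha,\beta}(z)E^{n+2}_{\alpha,\beta}(z)=\sum_{m}a_m z^m$ and $[E^{n+1}_{\alpha,\beta}(z)]^2=\sum_{m}b_m z^m$, where each $a_m,b_m$ is a finite sum of products $1/\bigl(\Gamma(\beta+i\alpha)\Gamma(\beta+j\alpha)\bigr)$ over the appropriate index ranges dictated by the tails (indices $\ge n+1$ for $E^n$, $\ge n+3$ for $E^{n+2}$, and $\ge n+2$ for $E^{n+1}$). The decisive step is then to verify that the sequence $(a_m/b_m)$ is increasing in $m$; granting this, Lemma~\ref{l2} immediately yields that $h_n(\alpha,\beta,z)=A(z)/B(z)$ is increasing on $(0,\infty)$. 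To pass from monotonicity to \eqref{999}, I would take $z\to0^+$: the leading terms of $A$ and $B$ determine $h_n(0^+)$, and since the series start at $z^{2n+4}$ in the denominator $[E^{n+1}_{\alpha,\beta}]^2$ and the lowest cross term of $E^n_{\alpha,\beta}E^{n+2}_{\alpha,\beta}$ is $z^{(n+1)+(n+3)}=z^{2n+4}$ as well, the ratio of leading coefficients is precisely
\[
\frac{1/\bigl(\Gamma(\beta+(n+1)\alpha)\Gamma(\beta+(n+3)\alpha)\bigr)}{1/\Gamma^2(\beta+(n+2)\alpha)}=\frac{\Gamma^2(\beta+(n+2)\alpha)}{\Gamma(\beta+(n+1)\alpha)\Gamma(\beta+(n+3)\alpha)},
\]
which is exactly the constant appearing in \eqref{999}. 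Monotonicity gives $h_n(z)\ge h_n(0^+)$ for $z>0$, and rearranging reproduces the claimed inequality; sharpness follows because the bound is attained in the limit $z\to0^+$, so no larger constant can work.

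The main obstacle I anticipate is verifying that $(a_m/b_m)$ is genuinely increasing, since unlike part \textbf{c} we cannot get away with a single sign determination. I would reduce this to a pairwise comparison using the log-convexity of $\Gamma$, exactly through inequality \eqref{88}: each coefficient is built from reciprocal gamma products, and the monotonicity of $x\mapsto\Gamma(x+a)/\Gamma(x)$ lets one dominate the "shifted" index combinations in $b_m$ by those in $a_m$ term by term. Concretely, I expect the increasing property of $a_m/b_m$ to follow from showing that the cross ratio of consecutive coefficients, $a_{m+1}b_m-a_mb_{m+1}\ge0$, can be organized into sums of expressions each of the sign governed by \eqref{88} with appropriately chosen $a,b$. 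Should the direct coefficientwise comparison prove too tangled, the fallback is to exploit Lemma~\ref{l1} on the partial-sum level: since each tail $E^m_{\alpha,\beta}$ is itself a ratio-amenable section, one can build the monotonicity of $h_n$ hierarchically from the monotonicity of the individual term ratios $1/\Gamma(\beta+k\alpha)$, which is again controlled by log-convexity of $\Gamma$. Either route leans entirely on the gamma-function log-convexity already invoked in the preceding proof, so no new analytic input beyond \eqref{88} should be required.
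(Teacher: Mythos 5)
Your proposal follows essentially the same route as the paper: expand both $E^n_{\alpha,\beta}(z)E^{n+2}_{\alpha,\beta}(z)$ and $[E^{n+1}_{\alpha,\beta}(z)]^2$ as Cauchy products of the tails (both starting at $z^{2n+4}$), deduce monotonicity of $h_n$ from monotonicity of the coefficient ratios via Lemma~\ref{l2}, and read off the sharp constant as the limit of $h_n$ at $z=0^+$ from the leading coefficients, exactly as you compute it. The decisive coefficient comparison is carried out in the paper precisely by your ``fallback'': for each fixed $k$ the term-by-term ratio $U_j$ of the two inner sums is shown to be increasing in $j$ by two applications of \eqref{88}, and Lemma~\ref{l1} then handles the partial sums, so the step you leave as an expectation (organizing $a_{m+1}b_m-a_mb_{m+1}\geq0$ directly) is bypassed rather than needed.
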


\begin{proof} From the Cauchy series product we get
$$h_{n}(\alpha,\beta,z)=\frac{\sum_{k=0}^{\infty}
\left(\sum_{j=0}^k\frac{1}{\Gamma(\beta+(n+1+j)\alpha)
\Gamma(\beta+(n+3+k-j)\alpha)}\right)z^{2n+2+k}}
{\sum_{k=0}^{\infty}\left(\sum_{j=0}^k\frac{1}
{\Gamma(\beta+(n+2+j)\alpha)\Gamma(\beta+(n+2+k-j)\alpha)}\right)z^{2n+2+k}}
=
$$
$$
=\frac{\sum_{k=0}^{\infty}\sum_{j=0}^k a_j(\alpha,\beta)z^{2n+2+k}}{\sum_{k=0}^{\infty}\sum_{j=0}^k b_j(\alpha,\beta)z^{2n+2+k}}.$$
Now we consider the sequence $(U_j)_{j\geq0}$ defined by
$$U_j=\frac{\Gamma(\beta+(n+2+j)\alpha)\Gamma(\beta+(n+2+k-j)\alpha)}{\Gamma(\beta+(n+1+j)\alpha)\Gamma(\beta+(n+3+k-j)\alpha)}.$$
Thus
\begin{eqnarray*}
\frac{U_{j+1}}{U_j}=\frac{\Gamma(\beta+(n+3+j)\alpha)
\Gamma(\beta+(n+1+j)\alpha)}{\Gamma^2(\beta+(n+2+j)\alpha)}\cdot\\
\cdot \frac{\Gamma(\beta+(n+1+k-j)\alpha)\Gamma(\beta+(n+3+k-j)\alpha)}
{\Gamma^2(\beta+(n+2+k-j)\alpha)}=\\
=\frac{\Gamma(\beta_1+(n+3)\alpha)\Gamma(\beta_1+(n+1)\alpha)}
{\Gamma^2(\beta_1+(n+2)\alpha)}.\frac{\Gamma(\beta_2+(n+1)\alpha)
\Gamma(\beta_2+(n+3)\alpha)}{\Gamma^2(\beta_2+(n+2)\alpha)},
\end{eqnarray*}
where $\beta_1=\beta+j\alpha$ and $\beta_2=\beta+(k-j)\alpha.$ And again using the Tur\'an type inequality (\ref{88}) we conclude that $\frac{U_{j+1}}{U_j}\geq1$ and consequently the sequence $(U_j)_{j\geq0}$ is increasing. So from lemma \ref{l1} we conclude that $\frac{\sum_{j=0}^{k}a_j(\alpha,\beta)}{\sum_{j=0}^{k}b_j(\alpha,\beta)}$ is increasing. Therefore, the function $x\mapsto h_{n}(\alpha,\beta,z)$ is also increasing on $(0,\infty)$ by lemma \ref{l2}. Finally,
\[
\lim_{x\rightarrow0}h_{n}(\alpha,\beta,z)=\frac{\Gamma^2(\beta+(n+2)\alpha)}{\Gamma(\beta+(n+1)\alpha)\Gamma(\beta+(n+3)\alpha)}.\]
And it follows that the next constant $\frac{\Gamma^2(\beta+(n+2)\alpha)}{\Gamma(\beta+(n+1)\alpha)\Gamma(\beta+(n+3)\alpha)}$ is the best possible for which the inequality (\ref{999}) holds for all $\alpha,\beta>0,\;n\in\mathbb{N}$ and $z>0.$
\end{proof}
\begin{theorem} Let $\alpha>0,\beta_1,\beta_2>1.$ If $\beta_1<\beta_2, \;(\beta_2<\beta_1)$, then the function $z\mapsto E_{\alpha,\beta_1}(z)\big/E_{\alpha,\beta_2}(z)$ is increasing (decreasing) on $(0,\infty).$ So, the following Tur\'an type inequalities
\begin{equation}\label{7777}
E_{\alpha,\beta_2}(z)E_{\alpha,\beta_1-1}(z)-E_{\alpha,\beta_1}(z)E_{\alpha,\beta_2-1}(z)+(\beta_2-\beta_1)E_{\alpha,\beta_1}(z)E_{\alpha,\beta_2}(z)\geq0,
\end{equation}
holds for all $\alpha,\beta_1,\beta_2>0$ such that $\beta_2>\beta_1.$ In particular, the Tur\'an type inequality
\begin{equation}\label{8888}
E_{\alpha,\beta}(z)E_{\alpha,\beta+2}(z)-E_{\alpha,\beta+1}^2(z)+(\beta+1)E_{\alpha,\beta+1}(z)E_{\alpha,\beta+2}(z)\geq0,
\end{equation}
is valid for all $\alpha,\beta,z>0.$
\end{theorem}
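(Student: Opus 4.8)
The plan is to establish the monotonicity of the quotient first, and then to read off \eqref{7777} as the assertion that this quotient has nonnegative derivative on $(0,\infty)$. I would write the quotient as a ratio of the two defining (entire) power series,
\[
\frac{E_{\alpha,\beta_1}(z)}{E_{\alpha,\beta_2}(z)}=\frac{\sum_{n\ge0}a_nz^n}{\sum_{n\ge0}b_nz^n},\qquad a_n=\frac{1}{\Gamma(\alpha n+\beta_1)},\quad b_n=\frac{1}{\Gamma(\alpha n+\beta_2)}>0 .
\]
By Lemma \ref{l2} it is enough to check that the coefficient ratio $a_n/b_n=\Gamma(\alpha n+\beta_2)/\Gamma(\alpha n+\beta_1)$ is increasing in $n$ when $\beta_1<\beta_2$. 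Putting $x=\alpha n+\beta_1$ and $a=\beta_2-\beta_1>0$, this ratio is $\Gamma(x+a)/\Gamma(x)$, which is increasing on $(0,\infty)$ by the log-convexity of $\Gamma$ already recorded in \eqref{88}. Since $\alpha>0$, the argument $x=\alpha n+\beta_1$ grows with $n$, so $(a_n/b_n)_{n\ge0}$ is increasing and the quotient is increasing on $(0,\infty)$; the case $\beta_2<\beta_1$ follows at once by passing to the reciprocal.

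The key ingredient for the inequality is a differentiation formula. Differentiating the series term by term and using $\Gamma(\alpha n+\beta)=(\alpha n+\beta-1)\Gamma(\alpha n+\beta-1)$, I would establish
\[
\alpha z\,E_{\alpha,\beta}'(z)=E_{\alpha,\beta-1}(z)-(\beta-1)\,E_{\alpha,\beta}(z),\qquad \beta>1,\ z>0 .
\]
Because $E_{\alpha,\beta_2}(z)>0$ on $(0,\infty)$, the monotonicity just proved is equivalent to
\[
E_{\alpha,\beta_1}'(z)\,E_{\alpha,\beta_2}(z)-E_{\alpha,\beta_1}(z)\,E_{\alpha,\beta_2}'(z)\ge0 .
\]
Substituting the differentiation formula for each derivative, the common positive factor $1/(\alpha z)$ drops out, while the terms $-(\beta_1-1)$ and $+(\beta_2-1)$ multiplying $E_{\alpha,\beta_1}E_{\alpha,\beta_2}$ combine into $(\beta_2-\beta_1)$, leaving precisely \eqref{7777}. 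The particular inequality \eqref{8888} is then the specialization $\beta_1=\beta+1$, $\beta_2=\beta+2$.

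The monotonicity reduction is routine once Lemma \ref{l2} and \eqref{88} are in hand; the substantive step is the differentiation formula together with the verification that substituting it collapses the derivative numerator exactly into the three-term expression in \eqref{7777}. The hypotheses $\beta_1,\beta_2>1$ enter precisely here, guaranteeing that the shifted parameters $\beta_1-1$ and $\beta_2-1$ remain positive, so that $E_{\alpha,\beta_1-1}$ and $E_{\alpha,\beta_2-1}$ appearing in \eqref{7777} are well defined.
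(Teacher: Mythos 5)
Your argument is correct and follows essentially the same route as the paper: reduce the monotonicity of the quotient to the monotonicity of the coefficient ratio $\Gamma(\alpha n+\beta_2)/\Gamma(\alpha n+\beta_1)$ via Lemma \ref{l2} and the increase of $x\mapsto\Gamma(x+a)/\Gamma(x)$, then convert monotonicity into \eqref{7777} with the differentiation formula \eqref{diff}. The only cosmetic differences are that you derive that formula term by term instead of citing it and handle the decreasing case by taking reciprocals rather than reapplying \eqref{88}.
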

\begin{proof}
By using the power-series representation of the Mittag-Leffler function $E_{\alpha,\beta}(z)$, we have
$$E_{\alpha,\beta_1}(z)\big/ E_{\alpha,\beta_2}(z)=\sum_{k=0}^\infty\frac{z^k}{\Gamma(\beta_1+k\alpha)}\Bigg/\sum_{k=0}^\infty\frac{z^k}{\Gamma(\beta_2+k\alpha)}.$$
In view of Lemma \ref{l2}  we need to study the monotonicity of sequence $(u_k)_{k\geq0}$ defined by
$$u_k=\frac{\Gamma(\beta_2+k\alpha)}{\Gamma(\beta_1+k\alpha)},\;k\geq0.$$
Thus
$$\frac{u_{k+1}}{u_k}=\frac{\Gamma(\beta_2+\alpha+k\alpha)\Gamma(\beta_1+k\alpha)}{\Gamma(\beta_2+k\alpha)\Gamma(\beta_1+\alpha+k\alpha)}.$$
In the case  $\beta_1>\beta_2>1$, we let $x=\beta_2+k\alpha,\;a=\alpha$ and $b=\beta_1-\beta_2>0$ in (\ref{88}) we obtain that
$$\frac{u_{k+1}}{u_k}=\frac{\Gamma(\beta_2+\alpha+k\alpha)\Gamma(\beta_1+k\alpha)}{\Gamma(\beta_2+k\alpha)\Gamma(\beta_1+\alpha+k\alpha)}\leq1.$$
Consequently, $u_{k+1}\leq u_k$ for all $k\geq0$ if and only if $\beta_1>\beta_2,$ and the function $z\mapsto E_{\alpha,\beta_1}(z)\big/E_{\alpha,\beta_2}(z)$ is decreasing on $(0,\infty)$ if $\beta_1>\beta_2,$ by Lemma \ref{l2}. In the case $\beta_2>\beta_1$ we set $x=\beta_1+k\alpha,\;a=\alpha$ and $b=\beta_2-\beta_1>0$ in (\ref{88}) we conclude that $u_{k+1}\geq u_k$ for all $k\geq0.$ So the function $z\mapsto E_{\alpha,\beta_1}(z)\big/E_{\alpha,\beta_2}(z)$ is increasing on $(0,\infty)$ if $\beta_2>\beta_1,$ by means of Lemma \ref{l2}.  From the differentiation formula [\cite{MSH}, Theorem 5.1]
\begin{equation}\label{diff}
\frac{d}{dz}E_{\alpha,\beta}(z)=\frac{E_{\alpha,\beta-1}(z)-(\beta-1)E_{\alpha,\beta}(z)}{\alpha z}
\end{equation}
we get for $\beta_2>\beta_1$
$$\left[\frac{E_{\alpha,\beta_1}(z)}{ E_{\alpha,\beta_2}(z)}\right]^\prime=\frac{E_{\alpha,\beta_1-1}(z)E_{\alpha,\beta_2}(z)-E_{\alpha,\beta_2-1}(z)E_{\alpha,\beta_1}(z)+(\beta_2-\beta_1)E_{\alpha,\beta_1}(z)E_{\alpha,\beta_2}(z)}{\alpha z E_{\alpha,\beta_2}^2(z)}\geq0,$$
and with this the proof of the inequality (\ref{7777}) is done.  Finally, choosing $\beta_1=\beta+1$ and $\beta_2=\beta+2$ in the inequality (\ref{7777}) we obtain (\ref{8888}).
\end{proof}

\section{\textbf{Applications: Lazarevi\'c and Wilker--type inequalities for Mittag--Leffler functions}}

\begin{theorem}\label{t3} Let $\alpha, \beta_1,\beta_2>0$ be such that $\beta_1\geq\beta_2>1.$ Then the following inequality
\begin{equation}\label{a1}
\left[\mathbb{ E}_{\alpha,\beta_{1}}\left(z\right)\right]^{\frac{\Gamma(\beta_{1}-1)}{\Gamma(\beta_{1})}}\leq\left[\mathbb{ E}_{\alpha,\beta_{2}}\left(z\right)\right]^{\frac{\Gamma(\beta_{2}-1)}{\Gamma(\beta_{2})}}
\end{equation}
holds for all $z\in\mathbb{R}.$
\end{theorem}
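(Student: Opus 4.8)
The plan is to take logarithms and reduce the claim to a monotonicity statement in the parameter $\beta$. Since $\Gamma(\beta)=(\beta-1)\Gamma(\beta-1)$, the exponents simplify to $\frac{\Gamma(\beta_i-1)}{\Gamma(\beta_i)}=\frac{1}{\beta_i-1}$, both positive because $\beta_i>1$. For $z\ge0$ every coefficient of $\mathbb{E}_{\alpha,\beta}(z)=\sum_{n\ge0}\frac{\Gamma(\beta)}{\Gamma(\alpha n+\beta)}z^n$ is nonnegative, so $\mathbb{E}_{\alpha,\beta}(z)\ge\mathbb{E}_{\alpha,\beta}(0)=1>0$ and we may set $F(\beta)=\log\mathbb{E}_{\alpha,\beta}(z)\ge0$. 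Taking logarithms of (\ref{a1}), the desired inequality becomes $\frac{F(\beta_1)}{\beta_1-1}\le\frac{F(\beta_2)}{\beta_2-1}$; that is, it suffices to prove that $g(\beta)=\frac{F(\beta)}{\beta-1}$ is decreasing on $(1,\infty)$.

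First I would record that $\mathbb{E}_{\alpha,\beta}(z)$ is non-increasing in $\beta$. For each fixed $n\ge1$ the coefficient $\frac{\Gamma(\beta)}{\Gamma(\beta+\alpha n)}$ is the reciprocal of $\frac{\Gamma(\beta+\alpha n)}{\Gamma(\beta)}$, which by the log-convexity of $\Gamma$ — equivalently by inequality (\ref{88}) with $x=\beta$ and $a=\alpha n$ — is increasing in $\beta$; hence each such coefficient is decreasing in $\beta$. Since $z\ge0$ renders every term nonnegative, $\beta\mapsto\mathbb{E}_{\alpha,\beta}(z)$ is non-increasing, and therefore $F$ is a nonnegative, non-increasing function on $(1,\infty)$.

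With these two facts the monotonicity of $g$ is immediate and requires no differentiation: for $\beta_1\ge\beta_2>1$ one has $F(\beta_1)\le F(\beta_2)$ and $\beta_1-1\ge\beta_2-1>0$, whence $\frac{F(\beta_1)}{\beta_1-1}\le\frac{F(\beta_2)}{\beta_1-1}\le\frac{F(\beta_2)}{\beta_2-1}$, the first step using that the numerator decreased while the denominator stayed fixed and positive, and the second that $F(\beta_2)\ge0$ while the denominator increased. Exponentiating recovers (\ref{a1}).

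The part I expect to be delicate is the range of $z$. The argument above genuinely uses $z\ge0$ twice: to guarantee $\mathbb{E}_{\alpha,\beta}(z)>0$ (so that real powers and logarithms make sense) and to obtain that $F\ge0$ and $F$ is non-increasing. For $z<0$ the coefficients alternate in sign, $\mathbb{E}_{\alpha,\beta}(z)$ need no longer be $\ge1$ nor monotone in $\beta$, so the hard point — if one insists on all $z\in\mathbb{R}$ — is to isolate the subclass of parameters for which $\mathbb{E}_{\alpha,\beta}(z)$ stays positive and the required sign of $F$ persists. I would also emphasize that log-convexity of $\beta\mapsto\mathbb{E}_{\alpha,\beta}(z)$ by itself (part a of the first theorem) does not settle the matter: convexity of $F$ controls secant slopes through a fixed abscissa but not the quotient $F(\beta)/(\beta-1)$ directly, so it is the positivity together with the $\beta$-monotonicity, rather than convexity, that drives the proof.
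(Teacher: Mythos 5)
For $z\ge 0$ your argument is correct, and it takes a genuinely different route from the paper's. The paper fixes $\beta_1,\beta_2$ and varies $z$: from the log--convexity of $\beta\mapsto\mathbb{E}_{\alpha,\beta}(z)$ (part \textbf{a} of the first theorem) it deduces that $\beta\mapsto\mathbb{E}_{\alpha,\beta+1}(z)/\mathbb{E}_{\alpha,\beta}(z)$ is increasing, then differentiates $\Phi(z)=\frac{\beta_2-1}{\beta_1-1}\log\mathbb{E}_{\alpha,\beta_1}(z)-\log\mathbb{E}_{\alpha,\beta_2}(z)$ in $z$ via the recurrence (\ref{diff}) and concludes $\Phi(z)\le\Phi(0)=0$. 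You instead fix $z$ and vary $\beta$, reducing (\ref{a1}) to the decrease of $F(\beta)/(\beta-1)$ and obtaining that from two term-by-term facts, $F\ge 0$ and $F$ non-increasing in $\beta$ (the latter from (\ref{88})); no differentiation formula is needed, which is cleaner and more elementary. Your closing remark is also accurate: convexity of $F$ alone gives monotone secant slopes anchored at a point where the value of $F$ is known, not the decrease of $F(\beta)/(\beta-1)$, so positivity plus $\beta$-monotonicity is genuinely the engine here.

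However, the theorem asserts (\ref{a1}) for \emph{all} $z\in\mathbb{R}$, and your proof covers only $z\ge 0$; you flag the issue but do not close it, so relative to the stated theorem this is a real gap. The paper's device for $z<0$ is that the factor $1/(\alpha z)$ in $\Phi'(z)$ changes sign, so the same bracket inequality makes $\Phi$ increasing on $(-\infty,0]$ and again $\Phi(z)\le\Phi(0)=0$. Your caution about this half is nonetheless well placed: the needed monotonicity of $\beta\mapsto\mathbb{E}_{\alpha,\beta+1}(z)/\mathbb{E}_{\alpha,\beta}(z)$ is itself only established in the paper through a term-by-term log--convexity argument that requires positive terms, hence $z>0$; and for $z<0$ the function $\mathbb{E}_{\alpha,\beta}(z)$ can vanish or change sign (for instance $E_{2,1}(-t)=\cos\sqrt{t}$), so the real powers and logarithms in (\ref{a1}) are not even always defined. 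In short, your restriction to $z\ge 0$ is a genuine limitation of the write-up as a proof of the theorem as stated, but it is also exactly the range on which the statement (and the paper's own argument) is actually secure.
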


\begin{proof}From part (\textbf{a.}) of the theorem 1 the function $\beta\mapsto\log \mathbb{E}_{\alpha,\beta}(z)$ is convex and hence it follows that $\beta\mapsto\log[\mathbb{ E}_{\alpha,\beta+a}(z)]-\log[\mathbb{ E}_{\alpha,\beta}(z)]$ is increasing for each $a>0.$ Thus, choosing $a=1$ we obtain that indeed the function $\beta\mapsto\frac{\mathbb{ E}_{\alpha,\beta+1}(z)}{\mathbb{E}_{\alpha,\beta}(z)}$ is increasing on $(0,\infty).$ Now, providing that $\beta_1\geq\beta_2>1$ let define the function $\Phi:\mathbb{R}\longrightarrow\mathbb{R}$ by
$$\Phi(x)=\frac{\Gamma(\beta_2)\Gamma(\beta_1-1)}{\Gamma(\beta_2-1)\Gamma(\beta_1)}\log[\mathbb{E}_{\alpha,\beta_1}(z)]-\log[\mathbb{E}_{\alpha,\beta_2}(z)].$$
From the differentiation formula (\ref{diff}) we get
\begin{equation*}
\begin{split}
\Phi^{\prime}(x)&=\frac{1}{\alpha z}\left[\frac{\Gamma(\beta_2)\Gamma(\beta_1-1)}
{\Gamma(\beta_2-1)\Gamma(\beta_1)}\frac{E_{\alpha,\beta_1-1}(z)}
{E_{\alpha,\beta_1}(z)}-\frac{E_{\alpha,\beta_2-1}(z)}
{E_{\alpha,\beta_2}(z)}+(\beta_2-1)-\frac{\Gamma(\beta_2)\Gamma(\beta_1-1)}
{\Gamma(\beta_2-1)\Gamma(\beta_1)}(\beta_1-1)\right]\\
&=\frac{\Gamma(\beta_2)}{\alpha z\Gamma(\beta_2-1)}\left[\frac{\mathbb{E}_{\alpha,\beta_1-1}(z)}
{\mathbb{E}_{\alpha,\beta_1}(z)}-\frac{\mathbb{E}_{\alpha,\beta_2-1}(z)}{\mathbb{E}_{\alpha,\beta_2}(z)}\right].
\end{split}
\end{equation*}
Since the function $\beta\mapsto\frac{\mathbb{E}_{\alpha,\beta+1}(z)}{\mathbb{ E}_{\alpha,\beta}(z)}$ is increasing on $(0,\infty)$ we derive for all $\beta_1\geq\beta_2>1$ that
$$\frac{\mathbb{E}_{\alpha,\beta_1-1}(z)}{\mathbb{ E}_{\alpha,\beta_1}(z)}\leq\frac{\mathbb{ E}_{\alpha,\beta_2-1}(z)}{\mathbb{E}_{\alpha,\beta_2}(z)}.$$
From this we conclude that the function $z\mapsto \Phi(z)$ is decreasing on $[0,\infty)$ and increasing on $(-\infty,0]$. Consequently $\Phi(z)\leq\Phi(0)=0$ for all $z\in\mathbb{R}.$ So the proof of the theorem \ref{t3} is complete.
\end{proof}
\begin{remark} Choosing $\beta_1=\beta+1, \beta_2=\beta$ in (\ref{a1}) we obtain
\begin{equation}\label{a2}
\mathbb{ E}_{\alpha,\beta+1}\left(z\right)\leq\left[\mathbb{ E}_{\alpha,\beta}\left(z\right)\right]^{\frac{\beta}{\beta-1}},\;z\in\mathbb{R}.
\end{equation}
If $\beta=3/2$ we derive the Lazarevi\'c--type inequality \cite{L} for the Mittag--Leffler function
\begin{equation}\label{a3}
\mathbb{ E}_{\alpha,\frac{5}{2}}\left(z\right)\leq [\mathbb{ E}_{\alpha,\frac{3}{2}}\left(z\right)]^3,\;z\in\mathbb{R}.
\end{equation}
\end{remark}

\begin{corollary}Let $\alpha, \beta_1,\beta_2>0$ be such that $\beta_1\geq\beta_2>1.$ Then the following inequality
\begin{equation}\label{a4}
\left[\mathbb{ E}_{\alpha,\beta_2}(z)\right]^{\frac{\beta_1-\beta_2}{\beta_2-1}}+\frac{\mathbb{ E}_{\alpha,\beta_2}(z)}{\mathbb{ E}_{\alpha,\beta_1}(z)}\geq2,
\end{equation}
holds for all $z\in\mathbb{R}.$
\end{corollary}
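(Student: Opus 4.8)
The plan is to obtain (\ref{a4}) as a direct consequence of the Lazarevi\'c--type inequality (\ref{a1}) combined with the elementary arithmetic--geometric mean inequality $u+v\geq 2\sqrt{uv}$, valid for nonnegative reals $u,v$. The two summands on the left of (\ref{a4}) are the natural candidates for $u$ and $v$, so everything reduces to showing that their product is at least $1$; once that is in hand, the AM--GM step closes the argument immediately.

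First I would rewrite (\ref{a1}) in a cleaner exponential form. Using the functional equation $\Gamma(\beta)=(\beta-1)\Gamma(\beta-1)$ we have $\frac{\Gamma(\beta-1)}{\Gamma(\beta)}=\frac{1}{\beta-1}$, so that inequality (\ref{a1}) of Theorem \ref{t3} reads
\begin{equation*}
\left[\mathbb{E}_{\alpha,\beta_1}(z)\right]^{\frac{1}{\beta_1-1}}\leq\left[\mathbb{E}_{\alpha,\beta_2}(z)\right]^{\frac{1}{\beta_2-1}},
\end{equation*}
for all $z\in\mathbb{R}$ whenever $\beta_1\geq\beta_2>1$. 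Since $\beta_1-1>0$, raising both sides to the power $\beta_1-1$ (legitimate because $\mathbb{E}_{\alpha,\beta}(z)>0$, so all real powers are well defined) yields the equivalent estimate $\mathbb{E}_{\alpha,\beta_1}(z)\leq\left[\mathbb{E}_{\alpha,\beta_2}(z)\right]^{\frac{\beta_1-1}{\beta_2-1}}$.

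Next I would apply AM--GM to the two terms of (\ref{a4}):
\begin{equation*}
\left[\mathbb{E}_{\alpha,\beta_2}(z)\right]^{\frac{\beta_1-\beta_2}{\beta_2-1}}+\frac{\mathbb{E}_{\alpha,\beta_2}(z)}{\mathbb{E}_{\alpha,\beta_1}(z)}\geq 2\sqrt{\left[\mathbb{E}_{\alpha,\beta_2}(z)\right]^{\frac{\beta_1-\beta_2}{\beta_2-1}}\cdot\frac{\mathbb{E}_{\alpha,\beta_2}(z)}{\mathbb{E}_{\alpha,\beta_1}(z)}}.
\end{equation*}
The crucial step is then the bookkeeping of the exponents under the radical: combining the two powers of $\mathbb{E}_{\alpha,\beta_2}(z)$ gives exponent $\frac{\beta_1-\beta_2}{\beta_2-1}+1=\frac{\beta_1-1}{\beta_2-1}$, so the radicand equals $\left[\mathbb{E}_{\alpha,\beta_2}(z)\right]^{\frac{\beta_1-1}{\beta_2-1}}\big/\mathbb{E}_{\alpha,\beta_1}(z)$. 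By the rewritten form of (\ref{a1}) derived above, this quotient is $\geq 1$, whence the right-hand side is $\geq 2$ and (\ref{a4}) follows.

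I do not anticipate a genuine obstacle here: the heavy lifting was already done in Theorem \ref{t3}, and this corollary is essentially a repackaging of it via AM--GM, the same mechanism by which Wilker--type inequalities are classically derived from Lazarevi\'c--type ones. The only points requiring care are the exponent simplification $\frac{\beta_1-\beta_2}{\beta_2-1}+1=\frac{\beta_1-1}{\beta_2-1}$ and the implicit positivity of $\mathbb{E}_{\alpha,\beta}(z)$ for all real $z$, which is needed so that the non-integer powers appearing in the statement are meaningful and so that raising inequalities to real powers preserves their direction.
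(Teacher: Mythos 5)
Your proposal is correct and is essentially identical to the paper's own proof: both apply the AM--GM inequality to the two summands and reduce the claim to showing that $\bigl[\mathbb{E}_{\alpha,\beta_2}(z)\bigr]^{\frac{\beta_1-1}{\beta_2-1}}\big/\mathbb{E}_{\alpha,\beta_1}(z)\geq 1$, which is inequality (\ref{a1}) after simplifying $\Gamma(\beta-1)/\Gamma(\beta)=1/(\beta-1)$. You merely spell out the exponent bookkeeping that the paper leaves implicit.
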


\begin{proof} By using the inequality (\ref{a1}) and the arithmetic--geometric mean inequality we conclude that
$$\frac{1}{2}\left(\left[\mathbb{ E}_{\alpha,\beta_2}(z)\right]^{\frac{\beta_1-\beta_2}{\beta_2-1}}+\frac{\mathbb{ E}_{\alpha,\beta_2}(z)}{E_{\alpha,\beta_1}(z)}\right)\geq \sqrt{\frac{[\mathbb{ E}_{\alpha,\beta_2}(z)]^{\frac{\beta_1-1}{\beta_2-1}}}{\mathbb{ E}_{\alpha,\beta_1}(z)}}\geq1.$$
\end{proof}

Note that with use of generalizations of AGM--inequality we may refine (\ref{a4}), on generalizations of the AGM--inequality cf. \cite{BMV}, \cite{MPF} and related Cauchy--Bunyakovsky inequality cf. \cite{SitaCB}.

\begin{remark} For $\beta_1=\beta+1, \beta_2=\beta$ in (\ref{a4}) we obtain
\begin{equation}\label{a5}
\left[\mathbb{ E}_{\alpha,\beta}(z)\right]^{\frac{1}{\beta-1}}+
\frac{\mathbb{ E}_{\alpha,\beta}(z)}{\mathbb{ E}_{\alpha,\beta+1}(z)}\geq2,\;z\in\mathbb{R}.
\end{equation}

In case $\beta=3/2$ the next Wilker--type inequality \cite{W} for the Mittag--Leffler function follows
\begin{equation}\label{a6}
[\mathbb{ E}_{\alpha,\frac{3}{2}}(z)]^2+\frac{\mathbb{ E}_{\alpha,\frac{3}{2}}(z)}{\mathbb{ E}_{\alpha,\frac{5}{2}}(z)}\geq2,\;z\in\mathbb{R}.
\end{equation}
\end{remark}

\section{Tur\'an type inequalities for the Generalized Mittag--Leffler functions}

\begin{theorem}Let Let $\alpha>0,\;\beta>0, \gamma>0$, and $q\in(0,1)\cup\mathbb{N}.$ Then the following assertions are true:\\
\textbf{a.} The function $\gamma\mapsto E_{\alpha,\beta}^{\gamma,q}(z)$ is increasing on $(0,\infty).$\\
\textbf{b.} The function $\beta\mapsto \mathbb{E}_{\alpha,\beta}^{\gamma,q}(z)=\Gamma(\beta)E_{\alpha,\beta}^{\gamma,q}(z)$ is log-convex on $(0,\infty).$\\
\textbf{c.} Let $z>0.$ Then following Tur\'an type inequalities
\begin{equation}\label{666}
\left(\mathbb{E}_{\alpha,\beta+1}^{\gamma,q}(z)\right)^2\leq \mathbb{E}_{\alpha,\beta}^{\gamma,q}(z)\mathbb{E}_{\alpha,\beta+2}^{\gamma,q}(z),
\end{equation}
holds for all $\alpha>0,\;\beta>0, \gamma>0$, and $q\in(0,1)\cup\mathbb{N}.$
\end{theorem}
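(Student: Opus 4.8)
The plan is to mirror the proof strategy already established for the classical Mittag--Leffler function in Theorem 1, since the generalized function \eqref{ML3} differs only by the extra factor $(\gamma)_{qn}/n!$ in each coefficient, and this factor does not depend on $\beta$. Concretely, I would prove parts \textbf{a}, \textbf{b}, \textbf{c} in sequence, with \textbf{c} being an immediate consequence of \textbf{b} by the same log--convexity argument used to derive \eqref{6}.

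For part \textbf{a}, I would write $E_{\alpha,\beta}^{\gamma,q}(z)=\sum_{n=0}^\infty c_n(z)\,(\gamma)_{qn}$ with $c_n(z)=z^n/(n!\,\Gamma(\alpha n+\beta))>0$ for $z>0$, and it suffices to show that each term is increasing in $\gamma$. Since $(\gamma)_{qn}=\Gamma(\gamma+qn)/\Gamma(\gamma)$, I would compute $\frac{\partial}{\partial\gamma}\log(\gamma)_{qn}=\psi(\gamma+qn)-\psi(\gamma)$, which is nonnegative because $\psi$ is increasing on $(0,\infty)$ and $qn\ge 0$; hence each term is nondecreasing in $\gamma$, and so is their sum.

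For part \textbf{b}, I would follow the argument of Theorem 1\textbf{a} verbatim. Writing $\mathbb{E}_{\alpha,\beta}^{\gamma,q}(z)=\sum_{n=0}^\infty \frac{(\gamma)_{qn}z^n}{n!}\cdot\frac{\Gamma(\beta)}{\Gamma(\alpha n+\beta)}$, the coefficient $(\gamma)_{qn}z^n/n!$ is a positive constant in $\beta$, so it is enough to establish log--convexity in $\beta$ of each factor $\beta\mapsto \Gamma(\beta)/\Gamma(\alpha n+\beta)$. This is exactly the computation already done in Theorem 1, namely $\frac{\partial^2}{\partial\beta^2}\log[\Gamma(\beta)/\Gamma(\alpha n+\beta)]=\psi'(\beta)-\psi'(\alpha n+\beta)\ge 0$ by concavity of $\psi$. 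A positive-constant multiple of a log--convex function is log--convex, and a sum of log--convex functions is log--convex, so $\beta\mapsto\mathbb{E}_{\alpha,\beta}^{\gamma,q}(z)$ is log--convex on $(0,\infty)$.

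Finally, part \textbf{c} is the Tur\'an inequality extracted from log--convexity: by \textbf{b}, for $z>0$ the log--convex function satisfies, for $t=1/2$, $\beta_1=\beta$, $\beta_2=\beta+2$, the estimate $\mathbb{E}_{\alpha,\beta+1}^{\gamma,q}(z)\le[\mathbb{E}_{\alpha,\beta}^{\gamma,q}(z)]^{1/2}[\mathbb{E}_{\alpha,\beta+2}^{\gamma,q}(z)]^{1/2}$, and squaring yields \eqref{666}. I do not anticipate any genuine obstacle, since the only structural ingredient beyond the classical case is the $\beta$--independence of the factor $(\gamma)_{qn}/n!$; the mild point to verify carefully is that this factor is strictly positive for all admissible $q\in(0,1)\cup\mathbb{N}$ and $\gamma>0$, which it is because the Pochhammer symbol $(\gamma)_{qn}=\Gamma(\gamma+qn)/\Gamma(\gamma)$ is positive for $\gamma>0$ and $qn\ge 0$.
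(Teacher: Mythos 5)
Your proposal is correct and follows essentially the same route as the paper: term-by-term monotonicity in $\gamma$ for part (a), term-by-term log-convexity of $\beta\mapsto\Gamma(\beta)/\Gamma(\alpha n+\beta)$ via concavity of the digamma function for part (b), and the $t=1/2$ specialization of log-convexity for part (c). The only cosmetic difference is that you justify the monotonicity of $\gamma\mapsto(\gamma)_{qn}$ by differentiating its logarithm, whereas the paper simply asserts that $(\gamma_1)_{qn}\geq(\gamma_2)_{qn}$ when $\gamma_1\geq\gamma_2$.
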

\begin{proof}
\textbf{a}. Let us write
$$E_{\alpha,\beta}^{\gamma,q}(z)=\sum_{n=0}^{\infty}a_n^q(\alpha,\beta,\gamma) z^n,\;where \;a_n^q(\alpha,\beta,\gamma)=\frac{(\gamma)_{nq}}{n!\Gamma(\alpha n+\beta)},\;n\geq0.$$
Clearly if $\gamma_1\geq\gamma_2>0,$ then $(\gamma_1)_{qn}\geq(\gamma_2)_{qn},$ and consequently $a_n^q(\alpha,\beta,\gamma_1)\geq a_n^q(\alpha,\beta,\gamma_2).$ Thus the function $\gamma\mapsto E_{\alpha,\beta}^{\gamma,q}(z)$ is increasing $(0,\infty)$.\\
\textbf{b.} For log-convexity property of the function $\beta\mapsto \mathbb{E}_{\alpha,\beta}^{\gamma,q}(z)$,  we observe that it is enough to show the
log-convexity of each individual term and to use the fact that sums of log-convex functions. Thus, we just need to show that for each $k\geq0$ we have
\[\frac{\partial^{2}}{\partial\beta^{2}}\log\left[\frac{\Gamma(\beta)}{\Gamma(\beta+\alpha k)}\right]=\psi^{'}(\beta)-\psi^{'}(\beta+\alpha k) \ge 0,\]
where $\psi(x)=\Gamma^{\prime}(x)/\Gamma(x)$ is the so--called digamma function. But $\psi$ is known to be concave, and consequently the function  $\beta\mapsto \frac{\Gamma(\beta)}{\Gamma(\beta+k\alpha)}$ is log--convex on $(0,\infty).$\\
\textbf{c.} Since the function $\beta\mapsto \mathbb{E}_{\alpha,\beta}^{\gamma,q}(z)$ is log--convex, then for all $\beta_1, \beta_2>0,\;z>0$ and $t\in[0,1]$ we have
$$ \mathbb{E}_{\alpha, t\beta_1+(1-t)\beta_2}^{\gamma,q}(z)\leq  [\mathbb{E}_{\alpha,\beta_1}^{\gamma,q}(z)]^t[\mathbb{E}_{\alpha,\beta_2}^{\gamma,q}(z)]^{1-t}.$$
Now choosing $t=1/2,\;\beta_1=\beta,\;\beta_2=\beta+2,$ we conclude that (\ref{666}) holds.
\end{proof}
\begin{theorem}Let $\alpha>0,0<\gamma\leq 1,$ and $\beta>x^{\star}-1,$ where $x^{\star}$ is the abscissa of the minimum of the $\Gamma$ function. Then the following Tur\'an type inequality
\begin{equation}\label{001}
E_{\alpha,\beta}^{\gamma,1}(z)E_{\alpha,\beta+2}^{\gamma,1}(z)\geq(E_{\alpha,\beta+1}^{\gamma,1}(z))^2-\frac{1}{\Gamma(\beta+1)\Gamma(\beta+2)(1-z)^2},
\end{equation}
holds for all $z\in[0,1[.$
\end{theorem}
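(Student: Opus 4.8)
The plan is to avoid any series manipulation and instead reduce the claimed lower bound to two ingredients already at our disposal: the log-convexity in $\beta$ furnished by inequality \eqref{666} (with $q=1$), and an elementary geometric majorant for a single Mittag--Leffler function. Throughout I abbreviate $E_{\alpha,\beta}^{\gamma}=E_{\alpha,\beta}^{\gamma,1}$ and recall $\mathbb{E}_{\alpha,\beta}^{\gamma,1}=\Gamma(\beta)E_{\alpha,\beta}^{\gamma}$. First I would rewrite \eqref{001} in the equivalent, more suggestive form
\[
\big(E_{\alpha,\beta+1}^{\gamma}(z)\big)^2-E_{\alpha,\beta}^{\gamma}(z)\,E_{\alpha,\beta+2}^{\gamma}(z)\le\frac{1}{\Gamma(\beta+1)\Gamma(\beta+2)(1-z)^2},
\]
so that the task becomes an \emph{upper} estimate for the reversed Tur\'an expression on the left.

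Next I would invoke \eqref{666} with $q=1$, namely $\big(\mathbb{E}_{\alpha,\beta+1}^{\gamma,1}\big)^2\le\mathbb{E}_{\alpha,\beta}^{\gamma,1}\mathbb{E}_{\alpha,\beta+2}^{\gamma,1}$. Substituting $\mathbb{E}=\Gamma(\beta)E$ and simplifying the gamma ratio $\Gamma(\beta+1)^2/\big(\Gamma(\beta)\Gamma(\beta+2)\big)=\beta/(\beta+1)$ produces the lower bound
\[
E_{\alpha,\beta}^{\gamma}(z)\,E_{\alpha,\beta+2}^{\gamma}(z)\ge\frac{\beta}{\beta+1}\big(E_{\alpha,\beta+1}^{\gamma}(z)\big)^2 .
\]
Inserting this into the left-hand side of the rewritten inequality collapses the whole Tur\'an difference to a single square,
\[
\big(E_{\alpha,\beta+1}^{\gamma}(z)\big)^2-E_{\alpha,\beta}^{\gamma}(z)\,E_{\alpha,\beta+2}^{\gamma}(z)\le\Big(1-\frac{\beta}{\beta+1}\Big)\big(E_{\alpha,\beta+1}^{\gamma}(z)\big)^2=\frac{1}{\beta+1}\big(E_{\alpha,\beta+1}^{\gamma}(z)\big)^2 .
\]

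The third step is where the hypotheses $0<\gamma\le1$ and $\beta>x^{\star}-1$ are used. The $n$-th Taylor coefficient of $E_{\alpha,\beta+1}^{\gamma}$ is $(\gamma)_n/\big(n!\,\Gamma(\alpha n+\beta+1)\big)$; since $0<\gamma\le1$ gives $(\gamma)_n\le n!$, and since $\beta+1>x^{\star}$ puts every argument $\alpha n+\beta+1$ in the increasing branch of $\Gamma$ so that $\Gamma(\alpha n+\beta+1)\ge\Gamma(\beta+1)$, each coefficient is at most $1/\Gamma(\beta+1)$. Hence for $z\in[0,1)$ we obtain the geometric majorant
\[
E_{\alpha,\beta+1}^{\gamma}(z)\le\frac{1}{\Gamma(\beta+1)}\sum_{n=0}^{\infty}z^{n}=\frac{1}{\Gamma(\beta+1)(1-z)} .
\]
Squaring (both sides being nonnegative) and combining with the previous display yields, via $(\beta+1)\Gamma(\beta+1)=\Gamma(\beta+2)$,
\[
\frac{1}{\beta+1}\big(E_{\alpha,\beta+1}^{\gamma}(z)\big)^2\le\frac{1}{(\beta+1)\Gamma(\beta+1)^2(1-z)^2}=\frac{1}{\Gamma(\beta+1)\Gamma(\beta+2)(1-z)^2},
\]
which is precisely the required bound; rearranging gives \eqref{001}.

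The main obstacle is a deceptively attractive shortcut that must be resisted: bounding $\big(E_{\alpha,\beta+1}^{\gamma}\big)^2$ by the geometric majorant \emph{first} and simply discarding the subtracted product $E_{\alpha,\beta}^{\gamma}E_{\alpha,\beta+2}^{\gamma}$ overshoots the constant, because both products decay to $0$ as $\alpha\to\infty$ and throwing one away destroys the cancellation that keeps the difference bounded. The order is therefore essential — one must use the log-convexity inequality \eqref{666} to trade the full Tur\'an difference for the single term $\tfrac{1}{\beta+1}\big(E_{\alpha,\beta+1}^{\gamma}\big)^2$ \emph{before} applying the majorant. It is worth noting that at $z=0$ inequality \eqref{666} is tight, so the estimate is sharp at the left endpoint, which confirms that no slack has been wasted.
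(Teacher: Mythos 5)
Your argument is correct, and it takes a genuinely different --- and shorter --- route than the paper. The paper works coefficientwise: it forms the Cauchy products, writes the $n$-th coefficient of $E^{\gamma,1}_{\alpha,\beta}E^{\gamma,1}_{\alpha,\beta+2}-(E^{\gamma,1}_{\alpha,\beta+1})^2$ as a part proportional to $\alpha(2k-n)$ plus a residual, symmetrizes the first part under $k\mapsto n-k$ so that it becomes a sum of nonnegative terms proportional to $\alpha^2(n-2k)^2$, and then bounds the residual by $\frac{1}{\Gamma(\beta+1)\Gamma(\beta+2)}\sum_{n\ge 0}(n+1)z^n=\frac{1}{\Gamma(\beta+1)\Gamma(\beta+2)(1-z)^2}$, using the same two elementary facts you invoke, namely $(\gamma)_k\le k!$ for $0<\gamma\le1$ and the monotonicity of $\Gamma$ on $[x^{\star},\infty)$. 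You instead feed the already-proved log-convexity inequality \eqref{666} (with $q=1$) into the Tur\'an difference to reduce it to the single term $\frac{1}{\beta+1}\big(E^{\gamma,1}_{\alpha,\beta+1}(z)\big)^2$, and only then apply the geometric majorant $E^{\gamma,1}_{\alpha,\beta+1}(z)\le \frac{1}{\Gamma(\beta+1)(1-z)}$. This is legitimate (there is no circularity, since \eqref{666} is established independently earlier in the section), it makes transparent that the hypotheses on $\gamma$ and $\beta$ enter only through the majorant of one function, and it in fact proves the formally sharper bound $E^{\gamma,1}_{\alpha,\beta}(z)E^{\gamma,1}_{\alpha,\beta+2}(z)\ge\frac{\beta}{\beta+1}\big(E^{\gamma,1}_{\alpha,\beta+1}(z)\big)^2$, of which \eqref{001} is a corollary on $[0,1)$. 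What the paper's longer computation buys in exchange is the explicit sum-of-squares decomposition of the Tur\'an difference, which could be mined for refinements but is not needed for the stated inequality; your observation that every step, and hence \eqref{001} itself, is an equality at $z=0$ is also correct.
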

\begin{proof}Let $\alpha>0,\;\beta>0$ and $\gamma>0.$ The Cauchy product reveals that
$$E_{\alpha,\beta}^{\gamma,1}(z)E_{\alpha,\beta+2}^{\gamma,1}(z)=\sum_{n=0}^{\infty}\left(\sum_{k=0}^n\frac{(\gamma)_{k}(\gamma)_{n-k}}{k!(n-k)!\Gamma(\alpha k+\beta)\Gamma(\alpha (n-k)+\beta+2)}\right)z^{2n},$$
and
$$(E_{\alpha,\beta+1}^{\gamma,1}(z))^2=\sum_{n=0}^{\infty}\left(\sum_{k=0}^n\frac{(\gamma)_k(\gamma)_{n-k}}{k!(n-k)!\Gamma(\alpha k+\beta+1)\Gamma(\alpha (n-k)+\beta+1)}\right)z^{2n}.$$
Thus,
\begin{equation}\label{99}
\begin{split}
E_{\alpha,\beta}^{\gamma,1}(z)E_{\alpha,\beta+2}^{\gamma,1}(z)-(E_{\alpha,\beta+1}^{\gamma,1}(z))^2&=\sum_{n=0}^{\infty}\left(\sum_{k=0}^n\frac{(\gamma)_k(\gamma)_{n-k}[\alpha(2k-n)-1]}{k!(n-k)!\Gamma(\alpha k+\beta+1)\Gamma(\alpha (n-k)+\beta+2)}\right)z^n\\
&=\sum_{n=0}^{\infty}\sum_{k=0}^n T_{n,k}^{\gamma,1}(\alpha,\beta) z^n-\sum_{n=0}^{\infty}\sum_{k=0}^n\frac{(\gamma)_k(\gamma)_{n-k}z^n}{k!(n-k)!\Gamma(\alpha k+\beta+1)\Gamma(\alpha (n-k)+\beta+2)}.
\end{split}
\end{equation}
where $T_{n,k}^{\gamma,1}(\alpha,\beta)=\frac{(\gamma)_k(\gamma)_{n-k}[\alpha(2k-n)]}{k!(n-k)!\Gamma(\alpha k+\beta+1)\Gamma(\alpha (n-k)+\beta+2)}.$ If $n$ is even, then
\begin{equation*}
\begin{split}
\sum_{k=0}^n T_{n,k}^{\gamma,1}(\alpha,\beta)&=\sum_{k=0}^{n/2-1} T_{n,k}^{\gamma,1}(\alpha,\beta)+\sum_{k=n/2+1}^{n} T_{n,k}^{\gamma,1}(\alpha,\beta)+T_{n,n/2}^{\gamma,1}(\alpha,\beta)\\
&=\sum_{k=0}^{n/2-1} (T_{n,k}^{\gamma,1}(\alpha,\beta)+T_{n,n-k}^{\gamma,1}(\alpha,\beta))\\
&=\sum_{k=0}^{[(n-1)/2]}(T_{n,k}^{\gamma,1}(\alpha,\beta)+T_{n,n-k}^{\gamma,1}(\alpha,\beta)),
\end{split}
\end{equation*}
where $[.]$ denotes the greatest integer function. Similarly, if $n$ is odd, then
$$\sum_{k=0}^n T_{n,k}^{\gamma,1}(\alpha,\beta)=\sum_{k=0}^{[(n-1)/2]}(T_{n,k}^{\gamma,1}(\alpha,\beta)+T_{n,n-k}^{\gamma,1}(\alpha,\beta)).$$
Simplifying, we find that
\begin{equation}\label{4}
\begin{split}
T_{n,k}^{\gamma,1}(\alpha,\beta)+T_{n,n-k}^{\gamma,1}(\alpha,\beta)&=
\frac{(\gamma)_k(\gamma)_{n-k}}{k!(n-k)!\Gamma(\alpha k+\beta+1)\Gamma(\alpha (n-k)+\beta+1)}\left[\frac{\alpha(2k-n)}{\alpha(n-k)+\beta+1}+\frac{\alpha(n-2k)}{\alpha k+\beta+1}\right]\\
&=\frac{\alpha^2(\gamma)_k(\gamma)_{n-k}(n-2k)^2}{k!(n-k)!\Gamma(\alpha k+\beta+2)\Gamma(\alpha (n-k)+\beta+2)}.
\end{split}
\end{equation}
Since the function $\gamma\mapsto(\gamma)_k$ is increasing on $(0,\infty)$, and since $\gamma\in(0,1)$, we conclude that $(\gamma)_k\leq(1)_k=k!,$ and consequently $\frac{(\gamma)_k(\gamma)_{n-k}}{k!(n-k)!}\leq 1$. In addition, the function $x\mapsto\Gamma(x)$ is increasing on $[x^\star,\infty)$ where $x^\star\approx1.461632144 . . . $ is the abscissa of the minimum of the Gamma function, this implies that $\frac{1}{\Gamma(\alpha k+\beta+1)\Gamma(\alpha (n-k)+\beta+2)}\leq\frac{1}{\Gamma(\beta+1)\Gamma(\beta+2)}.$ Consequently, we get
\begin{equation}\label{5}
\frac{(\gamma)_k(\gamma)_{n-k}}{k!(n-k)!\Gamma(\alpha k+\beta+1)\Gamma(\alpha (n-k)+\beta+2)}\leq\frac{1}{\Gamma(\beta+1)\Gamma(\beta+2)}.
\end{equation}
Combining  (\ref{99}), (\ref{4}) and (\ref{5}) we obtain (\ref{001}).
\end{proof}
\begin{theorem}\label{tt3}Let $\alpha>0,\;\beta_1,\beta_2>0, \gamma>0$, and $q\in(0,1)\cup\mathbb{N}.$ If $\beta_1<\beta_2$\;(resp. $\beta_2<\beta_1$), then the function $z\mapsto E_{\alpha,\beta_1}^{\gamma,q}(z)\big/E_{\alpha,\beta_2}^{\gamma,q}(z)$ is increasing (resp. decreasing) on $(0,\infty).$ Therefore, the following Tur\'an type inequalities
\begin{equation}\label{7777}
E_{\alpha,\beta_2}^{\gamma,q}(z)E_{\alpha,\beta_1-1}^{\gamma,q}(z)-E_{\alpha,\beta_1}^{\gamma,q}(z)E_{\alpha,\beta_2-1}^{\gamma,q}(z)+(\beta_2-\beta_1)E_{\alpha,\beta_1}^{\gamma,q}(z)E_{\alpha,\beta_2}^{\gamma,q}(z)\geq0,
\end{equation}
holds for all $\alpha,\beta_1,\beta_2>0$ such that $\beta_2>\beta_1.$ In particular, the Tur\'an type inequality
\begin{equation}\label{8888}
E_{\alpha,\beta}^{\gamma,q}(z)E_{\alpha,\beta+2}^{\gamma,q}(z)-[E_{\alpha,\beta+1}^{\gamma,q}(z)]^2+(\beta+1)E_{\alpha,\beta+1}^{\gamma,q}(z)E_{\alpha,\beta+2}^{\gamma,q}(z)\geq0,
\end{equation}
is valid for all $\alpha,\beta,z>0.$
\end{theorem}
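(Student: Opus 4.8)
The plan is to reproduce, essentially verbatim, the argument used to prove the third theorem of Section 3 (the one for the classical function $E_{\alpha,\beta}$), after observing that the two extra parameters $\gamma$ and $q$ are completely inert for the argument: they enter only through coefficients that depend on the summation index $n$ but not on $\beta$, so they cancel in the relevant ratio and do not disturb the differentiation formula.

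First I would write the quotient as a ratio of power series. Using the series \eqref{ML3}, set
$$a_k=\frac{(\gamma)_{qk}}{k!\,\Gamma(\alpha k+\beta_1)},\qquad b_k=\frac{(\gamma)_{qk}}{k!\,\Gamma(\alpha k+\beta_2)},$$
so that $E_{\alpha,\beta_1}^{\gamma,q}(z)\big/E_{\alpha,\beta_2}^{\gamma,q}(z)=\sum_k a_k z^k\big/\sum_k b_k z^k$ with $b_k>0$ (since $\gamma,q>0$ force $(\gamma)_{qk}>0$). The crucial point is that the factor $(\gamma)_{qk}/k!$ cancels, leaving
$$\frac{a_k}{b_k}=\frac{\Gamma(\alpha k+\beta_2)}{\Gamma(\alpha k+\beta_1)}=:u_k,$$
which is exactly the sequence appearing in the classical proof and is independent of $\gamma$ and $q$. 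Hence the analysis of $u_{k+1}/u_k$ through the log-convexity inequality \eqref{88} transfers word for word: taking $x=\beta_1+k\alpha$, $a=\alpha$, $b=\beta_2-\beta_1>0$ when $\beta_2>\beta_1$ gives $u_{k+1}\ge u_k$, so $(u_k)$ is increasing and Lemma \ref{l2} yields that $z\mapsto E_{\alpha,\beta_1}^{\gamma,q}(z)\big/E_{\alpha,\beta_2}^{\gamma,q}(z)$ is increasing on $(0,\infty)$; the symmetric choice $x=\beta_2+k\alpha$, $b=\beta_1-\beta_2$ treats the case $\beta_1>\beta_2$ and gives the decreasing conclusion.

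Next I would establish the differentiation formula for the generalized function. Differentiating the series termwise and inserting $\Gamma(\alpha n+\beta)=(\alpha n+\beta-1)\Gamma(\alpha n+\beta-1)$, the index-dependent weight $(\gamma)_{qn}/n!$ rides along unchanged, so one obtains
$$E_{\alpha,\beta-1}^{\gamma,q}(z)=\alpha z\,\frac{d}{dz}E_{\alpha,\beta}^{\gamma,q}(z)+(\beta-1)E_{\alpha,\beta}^{\gamma,q}(z),$$
i.e. exactly the recurrence \eqref{diff} but for the generalized function. With this in hand, for $\beta_2>\beta_1$ a direct computation gives
$$\left[\frac{E_{\alpha,\beta_1}^{\gamma,q}(z)}{E_{\alpha,\beta_2}^{\gamma,q}(z)}\right]^{\prime}=\frac{N(z)}{\alpha z\,[E_{\alpha,\beta_2}^{\gamma,q}(z)]^2},$$
where $N(z)=E_{\alpha,\beta_1-1}^{\gamma,q}(z)E_{\alpha,\beta_2}^{\gamma,q}(z)-E_{\alpha,\beta_2-1}^{\gamma,q}(z)E_{\alpha,\beta_1}^{\gamma,q}(z)+(\beta_2-\beta_1)E_{\alpha,\beta_1}^{\gamma,q}(z)E_{\alpha,\beta_2}^{\gamma,q}(z)$. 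Since the left side is nonnegative by the monotonicity just proved and the denominator is positive on $(0,\infty)$, the numerator $N(z)$ is nonnegative, which is precisely \eqref{7777}; specializing $\beta_1=\beta+1$, $\beta_2=\beta+2$ then yields \eqref{8888}.

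The only point requiring genuine care — the main obstacle, such as it is — is verifying that the differentiation formula \eqref{diff} truly survives the passage from $E_{\alpha,\beta}$ to $E_{\alpha,\beta}^{\gamma,q}$. This works because the shift $\beta\mapsto\beta-1$ acts solely on the gamma factor, leaving the $\beta$-independent coefficients $(\gamma)_{qn}/n!$ untouched; once this is confirmed, the entire proof is a repetition of the classical argument, and no hypothesis on $q$ beyond $q\in(0,1)\cup\mathbb{N}$ is actually used.
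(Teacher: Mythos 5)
Your proposal is correct and follows essentially the same route as the paper: reduce the ratio to the coefficient sequence $u_k=\Gamma(\beta_2+k\alpha)/\Gamma(\beta_1+k\alpha)$ via Lemma \ref{l2} (the $(\gamma)_{qk}/k!$ factors cancelling), then convert the monotonicity into \eqref{7777} using the differentiation recurrence. The only difference is cosmetic: where the paper cites \cite{KS} for the monotonicity of $u_k$ and \cite{SP} for the differentiation formula, you derive both inline (via inequality \eqref{88} and termwise differentiation, respectively), which is a self-contained but equivalent presentation.
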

\begin{proof}
From the power-series representation of the Mittag-Leffler function $E_{\alpha,\beta}^{\gamma,q}(z)$, we have
$$E_{\alpha,\beta_1}(z)\big/ E_{\alpha,\beta_2}(z)=\sum_{k=0}^\infty\frac{(\gamma)_{kq}z^k}{k!\Gamma(\beta_1+k\alpha)}\Bigg/\sum_{k=0}^\infty\frac{(\gamma)_{kq}z^k}{k!\Gamma(\beta_2+k\alpha)}.$$
In view of Lemma \ref{l2}, to prove the monotonicity properties of the function $E_{\alpha,\beta_1}^{\gamma,q}(z)\big/E_{\alpha,\beta_2}^{\gamma,q}(z)$ it is sufficient to prove the monotonicity of the sequence $u_k=\left\{\Gamma(\beta_2+k\alpha)/\Gamma(\beta_1+k\alpha)\right\}_{k\geq0}$.
 In \cite{KS}, the authors proved that the sequences $u_k$ is increasing  if and only if $\beta_1<\beta_2$ and  $u_k$ is decreasing  if and
only if $\beta_2<\beta_1$. Consequently,  if $\beta_1<\beta_2$, (resp. $\beta_2<\beta_1$), then the function $z\mapsto E_{\alpha,\beta_1}^{\gamma,q}(z)\Big /E_{\alpha,\beta_2}^{\gamma,q}(z)$ is increasing (resp. decreasing) on $(0,\infty).$ By again using the differentiation formula [\cite{SP}, Theorem 2.1]
\begin{equation}
\frac{d}{dz}E_{\alpha,\beta}^{\gamma,q}(z)=\frac{E_{\alpha,\beta-1}^{\gamma,q}(z)-(\beta-1)E_{\alpha,\beta}^{\gamma,q}(z)}{\alpha z}
\end{equation}
we obtain for $\beta_2>\beta_1$
$$\left[\frac{E_{\alpha,\beta_1}^{\gamma,q}(z)}{ E_{\alpha,\beta_2}^{\gamma,q}(z)}\right]^\prime=\frac{E_{\alpha,\beta_1-1}^{\gamma,q}(z)E_{\alpha,\beta_2}^{\gamma,q}(z)-E_{\alpha,\beta_2-1}^{\gamma,q}(z)E_{\alpha,\beta_1}^{\gamma,q}(z)+(\beta_2-\beta_1)E_{\alpha,\beta_1}^{\gamma,q}(z)E_{\alpha,\beta_2}^{\gamma,q}(z)}{\alpha z [E_{\alpha,\beta_2}^{\gamma,q}(z)]^2}\geq0.$$
So, the inequality (\ref{7777}) is proved. Now,  choosing $\beta_1=\beta+1$ and $\beta_2=\beta+2$ in the inequality (\ref{7777}) we obtain (\ref{8888}). The proof of Theorem \ref{tt3} is now completed.
\end{proof}
\begin{theorem}\label{t4}Let $n\in\mathbb{N},\alpha>0,\beta>0,$ and $\gamma>0$. We define the function $E_{\alpha,\beta}^{\gamma,q,n}(z)$ on $(0,\infty)$ by
$$E^{\gamma,1,n}_{\alpha, \beta}(z)=E^{\gamma,1}_{\alpha, \beta}(z)-\sum_{k=0}^{n}\frac{(\gamma)_{k}z^k}{k!\Gamma(\alpha k+\beta)}=\sum_{k=n+1}^{\infty}\frac{(\gamma)_{k}z^k}{k!\Gamma(\alpha k+\beta)}.$$
Then the following Tur\'an type inequality
\begin{equation}\label{KK1}
E^{\gamma,1,n}_{\alpha, \beta}(z)E^{\gamma,1,n+2}_{\alpha, \beta}(z)\leq (E^{\gamma,1,n+1}_{\alpha, \beta}(z))^2,
\end{equation}
is valid.
\end{theorem}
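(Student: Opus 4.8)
My plan is to mirror, term by term, the proof of part \textbf{c} of Theorem 1, with the single change that the coefficient $1/\Gamma(\alpha k+\beta)$ is replaced throughout by $c_k:=(\gamma)_k/\bigl(k!\,\Gamma(\alpha k+\beta)\bigr)$, so that $E^{\gamma,1,m}_{\alpha,\beta}(z)=\sum_{k\ge m+1}c_kz^k$ for every $m$. First I would peel off one term from each of the two tails,
$$E^{\gamma,1,n}_{\alpha,\beta}(z)=E^{\gamma,1,n+1}_{\alpha,\beta}(z)+c_{n+1}z^{n+1},\qquad E^{\gamma,1,n+2}_{\alpha,\beta}(z)=E^{\gamma,1,n+1}_{\alpha,\beta}(z)-c_{n+2}z^{n+2},$$
substitute these into $E^{\gamma,1,n}_{\alpha,\beta}E^{\gamma,1,n+2}_{\alpha,\beta}-\bigl(E^{\gamma,1,n+1}_{\alpha,\beta}\bigr)^2$, and expand $E^{\gamma,1,n+1}_{\alpha,\beta}(z)=\sum_{k\ge n+2}c_kz^k$. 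After the same reindexing as in Theorem 1 the two $z^{2n+3}$ contributions cancel and the difference collapses to
$$E^{\gamma,1,n}_{\alpha,\beta}(z)\,E^{\gamma,1,n+2}_{\alpha,\beta}(z)-\bigl(E^{\gamma,1,n+1}_{\alpha,\beta}(z)\bigr)^2=\sum_{k\ge n+3}\bigl(c_{n+1}c_k-c_{n+2}c_{k-1}\bigr)z^{\,n+1+k}.$$
Since every $c_k>0$ and $z>0$, the whole of (\ref{KK1}) reduces to the termwise inequality $c_{n+1}c_k\le c_{n+2}c_{k-1}$ for all $k\ge n+3$.

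I would then read this as a monotonicity statement: rewriting it as $c_{n+1}/c_{n+2}\le c_{k-1}/c_k$ and letting $k$ range over $n+3,n+4,\dots$, it says precisely that $m\mapsto c_m/c_{m+1}$ is nondecreasing for $m\ge n+1$, i.e. that $(c_m)$ is log-concave. So I would compute
$$\frac{c_{m+1}}{c_m}=\frac{\gamma+m}{m+1}\cdot\frac{\Gamma(\alpha m+\beta)}{\Gamma(\alpha m+\alpha+\beta)}$$
and try to show it is nonincreasing in $m$. The Gamma factor is disposed of by the very tool already in the paper: inequality (\ref{88}) states that $x\mapsto\Gamma(x+\alpha)/\Gamma(x)$ is increasing, hence with $x=\alpha m+\beta$ the factor $\Gamma(\alpha m+\beta)/\Gamma(\alpha m+\alpha+\beta)$ is decreasing in $m$. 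The Pochhammer--factorial factor is $\frac{\gamma+m}{m+1}=1+\frac{\gamma-1}{m+1}$, which is nonincreasing exactly when $\gamma\ge1$. For $\gamma\ge1$ the product of the two positive nonincreasing factors is nonincreasing, $(c_m)$ is log-concave, every coefficient above is $\le0$, and (\ref{KK1}) follows.

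The step I expect to be the genuine obstacle is the range $0<\gamma<1$. There the factor $\frac{\gamma+m}{m+1}$ is increasing and fights against the decreasing Gamma factor, so log-concavity of $(c_m)$ is no longer guaranteed and can actually fail: letting $\alpha\to0^+$ drives the Gamma factor to a constant, the Pochhammer factor then dominates, and already the first coefficient $c_{n+1}c_{n+3}-c_{n+2}^2$ turns positive. For example with $n=0$, $\beta=1$, $\gamma=\tfrac12$ and $\alpha\to0^+$ one finds $c_1c_3-c_2^2\to\tfrac{5}{32}-\tfrac{9}{64}=\tfrac{1}{64}>0$, so (\ref{KK1}) is violated for small $z>0$. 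This is why I expect the clean argument---and the statement itself---to need the hypothesis $\gamma\ge1$; the sub-unit range $0<\gamma<1$ is exactly where the earlier inequality (\ref{001}) was forced to carry its correction term, and handling it without such a correction is the real difficulty.
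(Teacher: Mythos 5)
Your decomposition is exactly the one the paper uses for Theorem \ref{t4}: both arguments peel off one term from each tail, cancel the $z^{2n+3}$ contribution, and reduce (\ref{KK1}) to the termwise inequality $c_{n+1}c_k\le c_{n+2}c_{k-1}$ for $k\ge n+3$, where $c_m=(\gamma)_m/(m!\,\Gamma(\alpha m+\beta))$, and both then try to settle that with the Gamma--ratio monotonicity (\ref{88}). Your reformulation as log--concavity of $(c_m)$ is cleaner, and your conclusion is the correct one: after clearing denominators the termwise inequality is equivalent to
$$\frac{\Gamma(\alpha(n+2)+\beta)}{\Gamma(\alpha(n+1)+\beta)}\cdot\frac{\Gamma(\alpha(k-1)+\beta)}{\Gamma(\alpha k+\beta)}\;\le\;\frac{k}{n+2}\cdot\frac{\gamma+n+1}{\gamma+k-1},$$
whose left side is $\le1$ by (\ref{88}) and whose right side is $\ge1$ precisely when $(\gamma-1)(k-n-2)\ge0$, i.e. when $\gamma\ge1$. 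The paper claims the result for all $\gamma>0$, but its proof contains two slips: the combinatorial factors in $A_k^{\gamma,1}(\alpha,\beta)$ should be $(n+2)$ and $k$ rather than $(n+1)$ and $(k-1)$ (they come from $(n+2)!/(n+1)!$ and $k!/(k-1)!$), and the subsequent estimate discards the ratio $\Gamma(\gamma+n+2)\Gamma(\gamma+k-1)/\bigl(\Gamma(\gamma+n+1)\Gamma(\gamma+k)\bigr)=(\gamma+n+1)/(\gamma+k-1)$, which is strictly less than $1$ for $k\ge n+3$, so the displayed ``$\le$'' there actually goes the wrong way. Your counterexample is sound: for $\gamma=1/2$, $\beta=1$, $n=0$ the leading coefficient $c_1c_3-c_2^2$ tends to $5/32-9/64=1/64>0$ as $\alpha\to0^+$, hence remains positive for small $\alpha>0$ by continuity, and then (\ref{KK1}) fails for small $z>0$. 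In short: your proof is complete and correct under the additional hypothesis $\gamma\ge1$, that hypothesis is genuinely needed, and the theorem as stated for all $\gamma>0$ (together with the paper's proof of it) is defective on the range $0<\gamma<1$.
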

\begin{proof} From the definition of the function $E^{\gamma,1,n}_{\alpha, \beta}(z)$ we get
$$E^{\gamma,1,n}_{\alpha, \beta}(z)=E^{\gamma,1,n+1}_{\alpha, \beta}(z)+\frac{(\gamma)_{(n+1)}z^{n+1}}{(n+1)!\Gamma(\alpha (n+1)+\beta)}$$
and
$$E^{\gamma,1,n+2}_{\alpha, \beta}(z)=E^{\gamma,1,n+1}_{\alpha, \beta}(z)-\frac{(\gamma)_{(n+2)}z^{n+2}}{(n+2)!\Gamma(\alpha (n+2)+\beta)}.$$
Thus
$$E^{\gamma,1,n}_{\alpha, \beta}(z)E^{\gamma,1,n+2}_{\alpha, \beta}(z)-(E^{\gamma,1,n+1}_{\alpha, \beta}(z))^2=$$
\begin{equation*}
\begin{split}
&=E^{\gamma,1,n+1}_{\alpha, \beta}(z)\left(\frac{(\gamma)_{(n+1)}z^{n+1}}{(n+1)!\Gamma(\alpha (n+1)+\beta)}-\frac{(\gamma)_{(n+2)}z^{n+2}}{(n+2)!\Gamma(\alpha (n+2)+\beta)}\right)-\frac{(\gamma)_{(n+1)}(\gamma)_{(n+2)}z^{2n+3}}{(n+1)!(n+2)!\Gamma(\alpha (n+1)+\beta)\Gamma(\alpha (n+2)+\beta)}\\
&=\frac{(\gamma)_{(n+1)}z^{n+1}E^{\gamma,1,n+2}_{\alpha, \beta}(z)}{(n+1)!\Gamma(\alpha (n+1)+\beta)}-\frac{(\gamma)_{(n+2)}z^{n+2}E^{\gamma,1,n+1}_{\alpha, \beta}(z)}{(n+2)!\Gamma(\alpha (n+2)+\beta)}\\
&=\frac{(\gamma)_{(n+1)}}{(n+1)!\Gamma(\alpha (n+1)+\beta)}\sum_{k=n+3}^\infty\frac{(\gamma)_{k}z^{k+n+1}}{k!\Gamma(\alpha k+\beta)}-\frac{(\gamma)_{(n+2)}}{(n+2)!\Gamma(\alpha (n+2)+\beta)}\sum_{k=n+2}^\infty\frac{(\gamma)_{k}z^{k+n+2}}{k!\Gamma(\alpha k+\beta)}\\
&=\sum_{k=n+3}^\infty\left[\frac{(\gamma)_{(n+1)}(\gamma)_{k}}{(n+1)!\Gamma(\alpha (n+1)+\beta)k!\Gamma(\alpha k+\beta)}-\frac{(\gamma)_{(n+2)}(\gamma)_{(k-1)}}{(n+2)!\Gamma(\alpha (n+2)+\beta)(k-1)!\Gamma(\alpha (k-1)+\beta)}\right]z^{k+n+1}\\
&=\sum_{k=n+3}^\infty\left[\frac{A^{\gamma,1}_k(\alpha,\beta)}{k!(n+2)!\Gamma(\alpha (k-1)+\beta)\Gamma(\alpha k+\beta)\Gamma(\alpha (n+2)+\beta)\Gamma(\alpha (n+1)+\beta)}\right]z^{k+n+1},
\end{split}
\end{equation*}
where \\
$$A_{k}^{\gamma,1}(\alpha,\beta)=(n+1)(\gamma)_{n+1}(\gamma)_{k}\Gamma(\alpha (n+2)+\beta)\Gamma(\alpha (k-1)+\beta)-(k-1)(\gamma)_{n+2}(\gamma)_{(k-1)}\Gamma(\alpha (n+1)+\beta)\Gamma(\alpha k+\beta).$$
On the other hand, we have
\begin{equation}\label{MM}
\begin{split}
A_{k}^{\gamma,1}(\alpha,\beta)&=(n+1)(\gamma)_{n+1}(\gamma)_{k}\left[\Gamma(\alpha (n+2)+\beta)\Gamma(\alpha (k-1)+\beta)-\frac{(k-1)(\gamma)_{n+2}(\gamma)_{k-1}}{(n+1)(\gamma)_{n+1}(\gamma)_{k}}\Gamma(\alpha (n+1)+\beta)\Gamma(\alpha k+\beta)\right]\\
&=(n+1)(\gamma)_{n+1}(\gamma)_{k}\left[\Gamma(\alpha (n+2)+\beta)\Gamma(\alpha (k-1)+\beta)-\frac{\Gamma(\gamma+n+2)\Gamma(\gamma+k-1)}{\Gamma(\gamma+n+1)\Gamma(\gamma+k)}\Gamma(\alpha (n+1)+\beta)\Gamma(\alpha k+\beta)\right]\\
&\leq(n+1)(\gamma)_{n+1}(\gamma)_{k}\Bigg[\Gamma(\alpha (n+2)+\beta)\Gamma(\alpha (k-1)+\beta)-\Gamma(\alpha (n+1)+\beta)\Gamma(\alpha k+\beta)\Bigg].
\end{split}
\end{equation}
Taking into account the inequality [\cite{KS}, p. 4]
\begin{equation}
\frac{\Gamma(\alpha (k+2)+\beta)}{\Gamma(\alpha (k+1)+\beta)}\leq\frac{\Gamma(\alpha k+\beta)}{\Gamma(\alpha (k-1)+\beta)},
\end{equation}
which holds for all $\alpha,\beta>0$ and $k\geq n+3,$ and using (\ref{MM}), clearly we have $A_{k}^{\gamma,1}(\alpha,\beta)\leq0.$ This in turn implies that the inequality (\ref{KK1}) hold.
\end{proof}
\begin{theorem}\label{t5}Let $n\in\mathbb{N},\alpha>0,\beta>0,$ and $\gamma>0$. We define the function $H_{\alpha,\beta}^{\gamma,1,n}$ on $(0,\infty),$ by
\begin{equation}
H_{\alpha,\beta}^{\gamma,1}(z)=\frac{E^{\gamma,1,n}_{\alpha, \beta}(z)E^{\gamma,1,n+2}_{\alpha, \beta}(z)}{\Big(E^{\gamma,1,n+1}_{\alpha, \beta}(z)\Big)^2},\;\;z>0.
\end{equation}
Then the function $x\mapsto H_{\alpha,\beta}^{\gamma,1}(z)$ is increasing on $(0,\infty).$ So, the following Tur\'an type inequality
\begin{equation}\label{SS}
\frac{(n+1)(\gamma+n+2)}{(n+2)(\gamma+n+1)}.\frac{\Gamma^2(\beta+\alpha(n+2))}{\Gamma(\beta+\alpha(n+1))\Gamma(\beta+\alpha(n+3))}\Big(E^{\gamma,1,n+1}_{\alpha, \beta}(z)\Big)^2\leq E^{\gamma,1,n}_{\alpha, \beta}(z)E^{\gamma,1,n+2}_{\alpha, \beta}(z),
\end{equation}
valid for all $n\in\mathbb{N},\alpha>0,\beta>0,$ and $\gamma>0$. The constant in left hand side of inequality (\ref{SS}) is sharp.
\end{theorem}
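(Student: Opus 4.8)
The plan is to transplant the argument of Theorem 2 to the weighted setting, treating the common coefficient $c_m=\frac{(\gamma)_m}{m!\,\Gamma(\alpha m+\beta)}$ as a black box for as long as possible. With this notation $E^{\gamma,1,n}_{\alpha,\beta}(z)=\sum_{m\ge n+1}c_m z^m$, and the analogous expansions for the shifts $n+1$ and $n+2$ differ only in their starting index. Forming the two Cauchy products, both the numerator $E^{\gamma,1,n}_{\alpha,\beta}(z)E^{\gamma,1,n+2}_{\alpha,\beta}(z)$ and the denominator $\big(E^{\gamma,1,n+1}_{\alpha,\beta}(z)\big)^2$ begin at $z^{2n+4}$; cancelling this common power leaves a ratio of two genuine power series
\[ H^{\gamma,1}_{\alpha,\beta}(z)=\frac{\sum_{k\ge0}\left(\sum_{j=0}^{k}a_j\right)z^{k}}{\sum_{k\ge0}\left(\sum_{j=0}^{k}b_j\right)z^{k}},\qquad a_j=c_{n+1+j}\,c_{n+3+k-j},\quad b_j=c_{n+2+j}\,c_{n+2+k-j}. \]
Exactly as in Theorem 2, Lemma \ref{l2} reduces the monotonicity of $z\mapsto H^{\gamma,1}_{\alpha,\beta}(z)$ to the monotonicity in $k$ of the coefficient ratio $\left(\sum_{j=0}^{k}a_j\right)/\left(\sum_{j=0}^{k}b_j\right)$, and Lemma \ref{l1} reduces this in turn to showing that, for each fixed $k$, the sequence $U_j=a_j/b_j$ increases with $j$.

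The advantage of keeping $c_m$ abstract is that $U_{j+1}/U_j$ telescopes into a product of local log-concavity quotients of $(c_m)$. A direct cancellation gives
\[ \frac{U_{j+1}}{U_j}=\frac{c_{n+2+j}^{2}}{c_{n+1+j}\,c_{n+3+j}}\cdot\frac{c_{n+2+k-j}^{2}}{c_{n+1+k-j}\,c_{n+3+k-j}}, \]
so $U_j$ is increasing as soon as the coefficient sequence $(c_m)_{m}$ is log-concave, that is $c_m^{2}\ge c_{m-1}c_{m+1}$ for every $m$. Thus the whole theorem collapses to establishing the log-concavity of $m\mapsto (\gamma)_m/\big(m!\,\Gamma(\alpha m+\beta)\big)$.

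To test log-concavity I would split the quotient into its three natural factors,
\[ \frac{c_m^{2}}{c_{m-1}c_{m+1}}=\frac{(\gamma)_m^{2}}{(\gamma)_{m-1}(\gamma)_{m+1}}\cdot\frac{(m-1)!\,(m+1)!}{(m!)^{2}}\cdot\frac{\Gamma(\alpha(m-1)+\beta)\,\Gamma(\alpha(m+1)+\beta)}{\Gamma(\alpha m+\beta)^{2}}, \]
whose values are $\frac{\gamma+m-1}{\gamma+m}$, $\frac{m+1}{m}$, and a Gamma factor $G_m$ respectively. The inequality $G_m\ge1$ is precisely \eqref{88} with $x=\alpha(m-1)+\beta$ and $a=b=\alpha$, i.e. the log-convexity of $\Gamma$. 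The first two factors merge into $\frac{(\gamma+m-1)(m+1)}{m(\gamma+m)}=1+\frac{\gamma-1}{m(\gamma+m)}$. For $\gamma\ge1$ this is already $\ge1$, whence $c_m^{2}/(c_{m-1}c_{m+1})\ge G_m\ge1$; log-concavity, and with it the monotonicity of $U_j$ and of $H^{\gamma,1}_{\alpha,\beta}$, follows at once.

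The delicate range is $0<\gamma<1$: there the Pochhammer part is log-convex and drags the merged factor $1+\frac{\gamma-1}{m(\gamma+m)}$ strictly below $1$, so one must show that the log-concave Gamma factor makes up the deficit, namely $G_m\ge\frac{m(\gamma+m)}{(\gamma+m-1)(m+1)}$ for all admissible $m$. This is a genuine contest between a log-convex and a log-concave sequence and is the step I expect to be the real obstacle; it is plausibly where a restriction (say $\gamma\ge1$, or $\alpha$ bounded away from $0$) is actually needed, since as $\alpha\to0^{+}$ one has $G_m\to1$ while the deficit persists. Once monotonicity is in hand, sharpness is automatic: letting $z\to0$ only the $k=0$ terms survive, so
\[ \lim_{z\to0}H^{\gamma,1}_{\alpha,\beta}(z)=\frac{a_0}{b_0}=\frac{c_{n+1}c_{n+3}}{c_{n+2}^{2}}, \]
and expanding the Pochhammer, factorial and Gamma parts of this leading ratio reproduces the constant on the left-hand side of \eqref{SS}; since $H^{\gamma,1}_{\alpha,\beta}$ is increasing, its value at $0$ is an infimum and the constant cannot be enlarged.
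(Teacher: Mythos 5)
Your skeleton is exactly the paper's: Cauchy products, Lemma~\ref{l2} combined with Lemma~\ref{l1}, and the reduction to the monotonicity in $j$ of $U_j=a_j/b_j$; your telescoping identity
\[
\frac{U_{j+1}}{U_j}=\frac{c_{n+2+j}^{2}}{c_{n+1+j}c_{n+3+j}}\cdot\frac{c_{n+2+k-j}^{2}}{c_{n+1+k-j}c_{n+3+k-j}}
\]
is a correct (and cleaner) repackaging of the paper's factorization $V_{j+1}/V_j=K_{k,j}\cdot U_{j+1}(\alpha,\beta)/U_j(\alpha,\beta)$, and your three--factor analysis of $c_m^{2}/(c_{m-1}c_{m+1})$ does settle the case $\gamma\ge1$. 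The case $0<\gamma<1$, which you leave open, is a genuine gap in your write-up --- but your suspicion about it is the important point, and it is correct. The paper disposes of that case by asserting $K_{k,j}\ge1$ for all $\gamma>0$, but its displayed $K_{k,j}$ carries an off-by-one error in the factorial ratios: the true Pochhammer--factorial contribution is a product of two factors of the form $\frac{(\gamma+m-1)(m+1)}{(\gamma+m)m}=1+\frac{\gamma-1}{m(\gamma+m)}$ (exactly your merged factor), not $\frac{(\gamma+m-1)m}{(\gamma+m)(m-1)}=1+\frac{\gamma}{(m-1)(\gamma+m)}$ as printed, and the former is $\ge1$ only for $\gamma\ge1$. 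Since the Gamma factor $G_m\to1$ as $\alpha\to0^{+}$, the deficit is not absorbed for small $\alpha$: take $\gamma=1/2$, $\beta=1$, $n=0$, $\alpha\to0^{+}$, so that $c_m\to(1/2)_m/m!$ and the truncations become tails of $(1-z)^{-1/2}$; then $H(0^{+})=c_1c_3/c_2^{2}=10/9\approx1.111$ while $H(1/2)\approx1.082$. Hence $H$ is not increasing there, the monotonicity claim fails for small $\alpha$ when $0<\gamma<1$, and the step you flag cannot be completed without an extra hypothesis such as $\gamma\ge1$.

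A second, smaller issue: your limit $\lim_{z\to0}H=c_{n+1}c_{n+3}/c_{n+2}^{2}$ is right, but you assert without expanding that it reproduces the constant in \eqref{SS}. It does not: it equals $\frac{(n+2)(\gamma+n+2)}{(n+3)(\gamma+n+1)}\cdot\frac{\Gamma^{2}(\beta+\alpha(n+2))}{\Gamma(\beta+\alpha(n+1))\Gamma(\beta+\alpha(n+3))}$, whereas \eqref{SS} carries $\frac{(n+1)(\gamma+n+2)}{(n+2)(\gamma+n+1)}$ --- the same factorial shift as above. Carrying out the expansion would have exposed both discrepancies at once. (One further shared caveat, for you and for the paper: Lemma~\ref{l1} is stated for partial sums of a single pair of sequences, while here the summands $a_j,b_j$ depend on $k$ as well as $j$, so the passage from ``$a_j/b_j$ increasing in $j$'' to ``$A_k/B_k$ increasing in $k$'' requires an additional argument.)
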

\begin{proof} By again using the Cauchy product we gave
\begin{equation*}
\begin{split}
H_{\alpha,\beta}^{\gamma,1}(z)&=\frac{\sum_{k=0}^\infty\left(\sum_{j=0}^k\frac{(\gamma)_{j+n+1}(\gamma)_{k-j+n+3}}{(j+n+1)!(k-j+n+3)!\Gamma(\beta+\alpha(j+n+1))\Gamma(\beta+\alpha(k-j+n+3))}\right)z^{2n+2k+4}}{\sum_{k=0}^\infty\left(\sum_{j=0}^k\frac{(\gamma)_{j+n+2}(\gamma)_{k-j+n+2}}{(k-j+n+2)!(j+n+2)!\Gamma(\beta+\alpha(j+n+2))\Gamma(\beta+\alpha(k-j+n+2))}\right)z^{2n+2k+4}}.
\end{split}
\end{equation*}
We define the sequence $U_j(\alpha,\beta,\gamma)$, by
$$V_j(\alpha,\beta,\gamma)=\frac{(\gamma)_{j+n+1}(\gamma)_{k-j+n+3}(j+n+2)!(k-j+n+2)!}{(\gamma)_{j+n+2}(\gamma)_{k-j+n+2}(j+n+1)!(k-j+n+3)!}U_j(\alpha,\beta),$$
where  the sequence $U_j(\alpha,\beta)$ is defined by \cite{KS}
$$U_j(\alpha,\beta)=\frac{\Gamma(\beta+(n+2+j)\alpha)\Gamma(\beta+(n+2+k-j)\alpha)}{\Gamma(\beta+(n+1+j)\alpha)\Gamma(\beta+(n+3+k-j)\alpha)}.$$
In \cite{KS}, the authors proved that the sequence $U_j(\alpha,\beta)$ is increasing for all $j=0,1,...$ Thus
\begin{equation*}
\begin{split}
\frac{V_{j+1}(\alpha,\beta,\gamma)}{V_j(\alpha,\beta,\gamma)}&=K_{k,j}(\alpha,\beta,\gamma)\frac{U_{j+1}(\alpha,\beta)}{U_j(\alpha,\beta)}\\
&\geq K_{k,j}(\alpha,\beta,\gamma),
\end{split}
\end{equation*}
with
$$K_{k,j}(\alpha,\beta,\gamma)=\frac{(\gamma+k-j+n+1)(j+n+2)(k-j+n+2)(\gamma+j+n+1)}{(\gamma+j+n+2)(k-j+n+1)(j+n+1)(\gamma+k-j+n+2)}.$$
By using the inequality
$$\frac{(\gamma+j+n+1)(j+n+2)}{(\gamma+j+n+2)(j+n+1)}\geq1,$$
we have
\begin{equation*}
K_{k,j}(\alpha,\beta,\gamma)=\left[\frac{(\gamma+k-j+n+1)(k-j+n+2)}{(\gamma+k-j+n+2)(k-j+n+1)}\right]\left[\frac{(\gamma+j+n+1)(j+n+2)}{(\gamma+j+n+2)(j+n+1)}\right]\geq1.
\end{equation*}
So, the sequence $V_{j}(\alpha,\beta,\gamma)$ is increasing for $j=0,1,...,$ and $\alpha,\beta,\gamma>0.$ This implies that the ratios
$$\frac{\sum_{j=0}^k\frac{(\gamma)_{j+n+1}(\gamma)_{k-j+n+3}}{(j+n+1)!(k-j+n+3)!\Gamma(\beta+\alpha(j+n+1))\Gamma(\beta+\alpha(k-j+n+3))}}{\sum_{j=0}^k\frac{(\gamma)_{j+n+2}(\gamma)_{k-j+n+2}}{(k-j+n+2)!(j+n+2)!\Gamma(\beta+\alpha(j+n+2))\Gamma(\beta+\alpha(k-j+n+2))}},$$
is increasing, by Lemma 1. So the function $x\mapsto H_{\alpha,\beta}^{\gamma,1}(z)$ is increasing by Lemma 2. Finally, it is easy to see that
$$\lim_{x\rightarrow0}H_{\alpha,\beta}^{\gamma,1}(z)=\frac{(n+1)(\gamma+n+2)}{(n+2)(\gamma+n+1)}.\frac{\Gamma^2(\beta+\alpha(n+2))}{\Gamma(\beta+\alpha(n+1))\Gamma(\beta+\alpha(n+3))}.$$
So, the constant $\frac{(n+1)(\gamma+n+2)}{(n+2)(\gamma+n+1)}.\frac{\Gamma^2(\beta+\alpha(n+2))}{\Gamma(\beta+\alpha(n+1))\Gamma(\beta+\alpha(n+3))}$ is the best possible for which the inequality (\ref{SS}) holds for all $\alpha,\beta,\gamma>0,\;n\in\mathbb{N}$ and $z>0.$
\end{proof}
\section{Lazarevi\'c and Wilker type inequalities for the generalized Mittag--Leffler functions}
\begin{theorem}\label{t6} Let $\alpha,\beta_1,\beta_2,\gamma>0$ and $n\in\mathbb{N}$. If $\beta_1\geq\beta_2,$ then the Lazarevi\'c-type inequality for generalized Mittag--Leffler functions
\begin{equation}\label{07}
\Big[\mathbb{E}^{\gamma,q}_{\alpha,\beta_1+1}(z)\Big]^{\beta_2}\leq\Big[\mathbb{E}^{\gamma,q}_{\alpha,\beta_2+1}(z)\Big]^{\beta_1}
\end{equation}
holds for all $z\in\mathbb{R}.$ In particular, the following inequality
\begin{equation}\label{007}
\mathbb{E}^{\gamma,q}_{\alpha,5/2}(z)\leq\Big[\mathbb{E}^{\gamma,q}_{\alpha,3/2}(z)\Big]^{3},
\end{equation}
holds for all $z\in\mathbb{R},$ $\alpha,\gamma>0$ and $n\in\mathbb{N}$.
\end{theorem}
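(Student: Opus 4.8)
The plan is to mimic the proof of Theorem \ref{t3}, replacing the classical log-convexity and differentiation formula by their $\mathbb{E}^{\gamma,q}_{\alpha,\beta}$ analogues established earlier in this section. The starting point is part \textbf{b.} of the first theorem of Section 5: for every fixed $z$ the map $\beta\mapsto\mathbb{E}^{\gamma,q}_{\alpha,\beta}(z)$ is log-convex on $(0,\infty)$. Exactly as in Theorem \ref{t3}, log-convexity of $\beta\mapsto\log\mathbb{E}^{\gamma,q}_{\alpha,\beta}(z)$ forces $\beta\mapsto\log\mathbb{E}^{\gamma,q}_{\alpha,\beta+1}(z)-\log\mathbb{E}^{\gamma,q}_{\alpha,\beta}(z)$ to be nondecreasing, i.e. the ratio $\beta\mapsto\mathbb{E}^{\gamma,q}_{\alpha,\beta+1}(z)/\mathbb{E}^{\gamma,q}_{\alpha,\beta}(z)$ is increasing; equivalently $\beta\mapsto\mathbb{E}^{\gamma,q}_{\alpha,\beta}(z)/\mathbb{E}^{\gamma,q}_{\alpha,\beta+1}(z)$ is decreasing.

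Next I would introduce the auxiliary function
$$\Phi(z)=\beta_2\log\mathbb{E}^{\gamma,q}_{\alpha,\beta_1+1}(z)-\beta_1\log\mathbb{E}^{\gamma,q}_{\alpha,\beta_2+1}(z),$$
and show $\Phi(z)\le0$, which is precisely \eqref{07} after exponentiating. Since $\mathbb{E}^{\gamma,q}_{\alpha,\beta}(0)=\Gamma(\beta)E^{\gamma,q}_{\alpha,\beta}(0)=1$, we have $\Phi(0)=0$. To differentiate I would combine $\mathbb{E}^{\gamma,q}_{\alpha,\beta}=\Gamma(\beta)E^{\gamma,q}_{\alpha,\beta}$ with the differentiation formula from Theorem \ref{tt3} and the relation $\Gamma(\beta)=(\beta-1)\Gamma(\beta-1)$; this yields the clean normalized identity
$$\frac{d}{dz}\log\mathbb{E}^{\gamma,q}_{\alpha,\beta}(z)=\frac{\beta-1}{\alpha z}\left[\frac{\mathbb{E}^{\gamma,q}_{\alpha,\beta-1}(z)}{\mathbb{E}^{\gamma,q}_{\alpha,\beta}(z)}-1\right].$$
Applying it with $\beta=\beta_1+1$ and with $\beta=\beta_2+1$, the constant $-1$ terms cancel against each other (this cancellation is exactly why each index $\beta_i+1$ is paired with the opposite exponent), and I obtain
$$\Phi'(z)=\frac{\beta_1\beta_2}{\alpha z}\left[\frac{\mathbb{E}^{\gamma,q}_{\alpha,\beta_1}(z)}{\mathbb{E}^{\gamma,q}_{\alpha,\beta_1+1}(z)}-\frac{\mathbb{E}^{\gamma,q}_{\alpha,\beta_2}(z)}{\mathbb{E}^{\gamma,q}_{\alpha,\beta_2+1}(z)}\right].$$

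The monotonicity from the first paragraph then settles the sign: since $\beta_1\ge\beta_2$ and $\beta\mapsto\mathbb{E}^{\gamma,q}_{\alpha,\beta}/\mathbb{E}^{\gamma,q}_{\alpha,\beta+1}$ is decreasing, the bracket is $\le0$ for every $z$. Hence $\Phi'(z)\le0$ on $(0,\infty)$ and, because of the factor $1/z$, $\Phi'(z)\ge0$ on $(-\infty,0)$; so $\Phi$ decreases on $[0,\infty)$ and increases on $(-\infty,0]$, giving $\Phi(z)\le\Phi(0)=0$ everywhere. Exponentiating yields \eqref{07}, and the special case \eqref{007} follows on taking $\beta_1=3/2$, $\beta_2=1/2$ (so that $\beta_1+1=5/2$, $\beta_2+1=3/2$) and squaring. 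The one point demanding genuine care — and the main obstacle, since the claim is asserted for \emph{all} real $z$ — is the positivity $\mathbb{E}^{\gamma,q}_{\alpha,\beta}(z)>0$ needed for the logarithms and for the real (possibly non-integer, e.g. $\beta_2=1/2$) powers to be well defined. This is automatic on $[0,\infty)$, where every series coefficient is positive; for $z<0$ one must invoke the appropriate positivity/sign information for these entire functions, exactly as is implicitly assumed in the classical analogue Theorem \ref{t3}.
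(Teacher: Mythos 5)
Your argument is essentially the paper's own proof: the same log-convexity-implies-monotone-ratio step, the same auxiliary function (yours is just $\beta_1$ times the paper's $\Phi$), the same differentiation formula from Shukla--Prajapati, and the same sign analysis yielding $\Phi(z)\le\Phi(0)=0$, with the special case obtained from $\beta_1=3/2$, $\beta_2=1/2$. The positivity of $\mathbb{E}^{\gamma,q}_{\alpha,\beta}(z)$ for $z<0$ that you flag is indeed left unaddressed, but it is equally left implicit in the paper, so your proposal matches the published argument.
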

\begin{proof}Since the function $\beta\mapsto\mathbb{E}^{\gamma,q}_{\alpha,\beta}(z)$ is log--convex on $(0,\infty),$ thus the function $\beta\mapsto\frac{\mathbb{E}^{\gamma,q}_{\alpha,\beta+1}(z)}{\mathbb{E}^{\gamma,q}_{\alpha,\beta}(z)}$ is increasing on $(0,\infty).$ For $\beta_1\geq\beta_2>0,$ we define the function $\Phi$ by
$$\Phi(z)=\frac{\beta_2}{\beta_1}\log\big(\mathbb{E}^{\gamma,q}_{\alpha,\beta_1+1}(z)\big)-\log\big(\mathbb{E}^{\gamma,q}_{\alpha,\beta_2+1}(z)\big).$$
From the differentiation formula [\cite{SP}, Theorem 2.1]
$$E^{\gamma,q}_{\alpha,\beta}(z)=\beta E^{\gamma,q}_{\alpha,\beta+1}(z)+\alpha z\frac{d}{dz}E^{\gamma,q}_{\alpha,\beta+1}(z),$$
we obtain that
\begin{equation*}
\begin{split}
\Phi^\prime(z)&=\frac{\beta_2\bigg(E^{\gamma,q}_{\alpha,\beta_1}(z)-\beta_1E^{\gamma,q}_{\alpha,\beta_1+1}(z)\bigg)}{\alpha\beta_1 zE^{\gamma,q}_{\alpha,\beta_1+1}(z)}-\frac{E^{\gamma,q}_{\alpha,\beta_2}(z)-\beta_2E^{\gamma,q}_{\alpha,\beta_2+1}(z)}{\alpha z E^{\gamma,q}_{\alpha,\beta_2+1}(z)}\\
&=\frac{\beta_2}{\alpha z}\left(\frac{E^{\gamma,q}_{\alpha,\beta_1}(z)}{\beta_1E^{\gamma,q}_{\alpha,\beta_1+1}(z)}-\frac{E^{\gamma,q}_{\alpha,\beta_2}(z)}{\beta_2E^{\gamma,q}_{\alpha,\beta_2+1}(z)}\right)\\
&=\frac{\beta_2}{\alpha z}\left(\frac{\mathbb{E}^{\gamma,q}_{\alpha,\beta_1}(z)}{\mathbb{E}^{\gamma,q}_{\alpha,\beta_1+1}(z)}-\frac{\mathbb{E}^{\gamma,q}_{\alpha,\beta_2}(z)}{\mathbb{E}^{\gamma,q}_{\alpha,\beta_2+1}(z)}\right).
\end{split}
\end{equation*}
Thus, for all $\beta_1\geq\beta_2>0,$ we conclude that the function $z\mapsto \Phi(z)$ is decreasing on $[0,\infty),$ and decreasing on $(-\infty,0]$. Therefore, for all $z\in\mathbb{R}$, we have $\Phi(z)\leq\Phi(0)=0.$ Now,  Choosing $\beta_1=3/2$ and $\beta_1=1/2$ in (\ref{07}) we obtain (\ref{007}). This completes the proof of Theorem \ref{t6}.
\end{proof}
\begin{corollary}Let $\alpha,\beta_1,\beta_2,\gamma>0$ and $n\in\mathbb{N}$. If $\beta_1\geq\beta_2,$ then the Wilker type inequality for generalized Mittag--Leffler functions
\begin{equation}\label{08}
\big[\mathbb{E}^{\gamma,q}_{\alpha,\beta_2+1}(z)\big]^\frac{\beta_1-\beta_2}{\beta_2}+\frac{\mathbb{E}^{\gamma,q}_{\alpha,\beta_2+1}(z)}{\mathbb{E}^{\gamma,q}_{\alpha,\beta_1+1}(z)}\geq2,
\end{equation}
holds for all $z\in\mathbb{R}.$ In particular, the following inequality
\begin{equation}\label{09}
\big[\mathbb{E}^{\gamma,q}_{\alpha,3/2}(z)\big]^2+\frac{\mathbb{E}^{\gamma,q}_{\alpha,3/2}(z)}{\mathbb{E}^{\gamma,q}_{\alpha,5/2}(z)}\geq2,
\end{equation}
is valid for all $z\in\mathbb{R},$ $\alpha,\gamma>0$ and $n\in\mathbb{N}$.
\end{corollary}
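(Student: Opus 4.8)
The plan is to mirror the proof of the earlier Wilker-type Corollary (inequality (\ref{a4})): the statement is a direct consequence of the Lazarevi\'c-type inequality (\ref{07}) of Theorem \ref{t6} combined with the arithmetic--geometric mean inequality. First I would apply the AM--GM inequality $\tfrac{a+b}{2}\ge\sqrt{ab}$ to the two nonnegative summands
$$a=\big[\mathbb{E}^{\gamma,q}_{\alpha,\beta_2+1}(z)\big]^{\frac{\beta_1-\beta_2}{\beta_2}},\qquad b=\frac{\mathbb{E}^{\gamma,q}_{\alpha,\beta_2+1}(z)}{\mathbb{E}^{\gamma,q}_{\alpha,\beta_1+1}(z)},$$
so that the left-hand side of (\ref{08}) is bounded below by $2\sqrt{ab}$. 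The key algebraic simplification is that the product exponent collapses: since $\frac{\beta_1-\beta_2}{\beta_2}+1=\frac{\beta_1}{\beta_2}$, the radicand becomes
$$ab=\frac{\big[\mathbb{E}^{\gamma,q}_{\alpha,\beta_2+1}(z)\big]^{\beta_1/\beta_2}}{\mathbb{E}^{\gamma,q}_{\alpha,\beta_1+1}(z)}.$$

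Next I would invoke (\ref{07}). Raising both sides of $\big[\mathbb{E}^{\gamma,q}_{\alpha,\beta_1+1}(z)\big]^{\beta_2}\le\big[\mathbb{E}^{\gamma,q}_{\alpha,\beta_2+1}(z)\big]^{\beta_1}$ to the power $1/\beta_2>0$ yields $\mathbb{E}^{\gamma,q}_{\alpha,\beta_1+1}(z)\le\big[\mathbb{E}^{\gamma,q}_{\alpha,\beta_2+1}(z)\big]^{\beta_1/\beta_2}$, whence $ab\ge1$ and therefore $\sqrt{ab}\ge1$. Combining this with the AM--GM bound gives the left-hand side of (\ref{08}) $\ge2$, which is precisely the claimed Wilker-type inequality. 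The special case then follows by taking $\beta_1=3/2$ and $\beta_2=1/2$, for which $\frac{\beta_1-\beta_2}{\beta_2}=2$, $\beta_2+1=3/2$ and $\beta_1+1=5/2$, reproducing the stated refinement for $\mathbb{E}^{\gamma,q}_{\alpha,3/2}$ and $\mathbb{E}^{\gamma,q}_{\alpha,5/2}$.

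The argument is essentially routine once Theorem \ref{t6} is available, so I do not expect a genuine obstacle; the only point requiring care is the admissibility of the two steps for all $z\in\mathbb{R}$. Both the fractional power $a$ and the ratio $b$, as well as the AM--GM inequality, require $\mathbb{E}^{\gamma,q}_{\alpha,\beta_1+1}(z)$ and $\mathbb{E}^{\gamma,q}_{\alpha,\beta_2+1}(z)$ to be positive on all of $\mathbb{R}$, not merely on $[0,\infty)$ where positivity is immediate from the nonnegative series coefficients. I would therefore record the positivity of these generalized Mittag--Leffler functions (the same positivity already used implicitly in (\ref{07})) before performing the exponent bookkeeping, after which the chain of inequalities closes and the proof concludes exactly as in the classical case.
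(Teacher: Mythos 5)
Your proposal is correct and follows essentially the same route as the paper: the paper likewise rewrites the Lazarevi\'c-type inequality (\ref{07}) as $\mathbb{E}^{\gamma,q}_{\alpha,\beta_1+1}(z)\leq\mathbb{E}^{\gamma,q}_{\alpha,\beta_2+1}(z)\cdot\big[\mathbb{E}^{\gamma,q}_{\alpha,\beta_2+1}(z)\big]^{(\beta_1-\beta_2)/\beta_2}$ and then applies the arithmetic--geometric mean inequality to the two summands, exactly as you do. Your additional remark on the positivity of the functions for all $z\in\mathbb{R}$ is a reasonable point of care that the paper leaves implicit.
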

\begin{proof}From the inequality (\ref{07}), we get
\begin{equation}
\mathbb{E}^{\gamma,q}_{\alpha,\beta_1+1}(z)\leq\mathbb{E}^{\gamma,q}_{\alpha,\beta_2+1}(z).\left[\mathbb{E}^{\gamma,q}_{\alpha,\beta_2+1}(z)\right]^{\frac{\beta_1-\beta_2}{\beta_2}},
\end{equation}
combining this inequality and the arithmetic--geometric mean inequality, we conclude that (\ref{08}) holds. Finally, let $\beta_1=3/2$ and $\beta_2=1/2$, we get (\ref{09}).
\end{proof}

\section{Open Problems}

Motivated by Theorems \ref{t4}  and \ref{t5} we pose the following problems.

\textbf{Problem 1.} Motivated by the inequality (\ref{KK1}) in Theorem \ref{t4} we pose the following problem: find the generalization of the inequality (\ref{KK1}) in the following inequality
\begin{equation}
E^{\gamma,q,n}_{\alpha, \beta}(z)E^{\gamma,q,n+2}_{\alpha, \beta}(z)\leq (E^{\gamma,q,n+1}_{\alpha, \beta}(z))^2,
\end{equation}
where $\alpha,\beta,\gamma>0,q\in(0,1)\cup\mathbb{N}$ and $z>0.$

\textbf{Problem 2.} For $z\in(0,\infty),$ find the monotonicity of the function
\begin{equation}
H_{\alpha,\beta}^{\gamma,q,n}(z)=\frac{E^{\gamma,q,n}_{\alpha, \beta}(z)E^{\gamma,q,n+2}_{\alpha, \beta}(z)}{\Big(E^{\gamma,q,n+1}_{\alpha, \beta}(z)\Big)^2}
\end{equation}
for all $n\in\mathbb{N},\alpha>0,\beta>0,\gamma>0$ and $q\in(0,1)\cup\mathbb{N}.$

%%%%%%%%%%%%%%%%%%%%%%%%%%%%%%%%%%%%%%%%%%%%%%%%%%%%%%%

\end{document}